\newtheorem{theorem}{Theorem}
\theoremstyle{plain}
\newtheorem{lemma}{Lemma}
\newtheorem{proposition}{Proposition}
\numberwithin{equation}{section}
\begin{document}
\title{Design Lines}
\author{Harold N. Ward}
\address{Department of Mathematics\\
University of Virginia\\
Charlottesville, VA 22904\\
USA}
\email{hnw@virginia.edu}
\subjclass[2000]{ 05B05 (Primary); 14J26 (Secondary)}
\keywords{Block design, Hadamard design, Metis design, quasi-residual design}

\begin{abstract}
The two basic equations satisfied by the parameters of a block design define
a three-dimensional affine variety $\mathcal{D}$ in $\mathbb{R}^{5}$. A
point of $\mathcal{D}$ that is not in some sense trivial lies on four lines
lying in $\mathcal{D}$. These lines provide a degree of organization for
certain general classes of designs, and the paper is devoted to exploring
properties of the lines. Several examples of families of designs that seem
naturally to follow the lines are presented.
\end{abstract}

\maketitle

\section{Introduction\label{sectintro}}

Let $\mathcal{D}$ be the solution set in $\mathbb{R}^{5}$ of the two
equations connecting the parameters $v,b,r,k,\lambda $ of a balanced
incomplete block design:%
\begin{equation}
\mathcal{D=}\left\{ (v,b,r,k,\lambda ):vr=bk,\;r(k-1)=\lambda (v-1)\right\} .
\label{deseq}
\end{equation}%
We shall be interested in lines in $\mathcal{D}$, \textbf{design lines},
motivated in part by a consideration of \emph{Metis designs} satisfying the
relation $v=r+k+1$. Metis designs arose in \cite{MMW} on the way to a
variant of the construction of certain quasi-symmetric designs found by
Bracken, McGuire, and Ward \cite{BMW}. Symmetric Hadamard designs are among
designs with $v=r+k+1$, for which $v=b=4\lambda +3$ and $r=k=2\lambda +1$.
In most of this paper we shall consider only the parameters and not ask that
a point of $\mathcal{D}$ correspond to a genuine design--one that actually
has a model; if it does, we shall say the design \textbf{exists}. However,
we shall use the symbol $\Delta $ to represent a point of $\mathcal{D}$, and
we shall still refer to such a point $\Delta =(v,b,r,k,\lambda )$ on $%
\mathcal{D}$ as a design (if $\Delta $ is symmetric, we may use the
traditional shorthand $\Delta =(v,k,\lambda )$). It is not at all clear
whether the stratification of designs produced by the lines is of any real
significance. Nevertheless, a number of families of designs seem to respect
the lines. In Section \ref{sectfam} we give examples illustrating the
implied taxonomy.

The \textbf{complement} $\Delta C$ of a design $\Delta =(v,b,r,k,\lambda )$
is the design obtained (for a genuine design) by replacing each block by its
complement. Its parameters are given by 
\begin{equation*}
\Delta C=(v,b,b-r,v-k,b-2r+\lambda ),
\end{equation*}%
and if $\Delta \in \mathcal{D}$, then $\Delta C\in \mathcal{D}$ also. Some
other designs created from $\Delta $ when it is genuine are \textbf{multiple 
}designs, obtained by allowing each block to appear $m$ times. This design $%
m\times \Delta $ has parameters $(v,mb,mr,k,m\lambda )$; and for any $\Delta
\in \mathcal{D}$ and arbitrary $m\in \mathbb{R}$, we define $\Delta
M_{m}=(v,mb,mr,k,m\lambda )$, which is also in $\mathcal{D}$. Provided that $%
b,r$, and $\lambda $ are not all 0, the set $\{\Delta M_{m}:m\in \mathbb{R\}}
$ is a line in $\mathcal{D}$, the \textbf{multiple line} through $\Delta $.
It is the first of our design lines.

\section{Planes in $\mathcal{D}$}

In this section we shall show that there are seven planes in $\mathcal{D}$,
the $\mathcal{D}$\textbf{-planes}, whose members may be considered as
degenerate. For $\Delta =(v,b,r,k,\lambda )$ we define $Q=Q(\Delta
)=r^{2}-\lambda b$. The design equations give what we shall call the $Q$%
\textbf{-equations}:%
\begin{eqnarray}
Qv &=&b(r-\lambda )  \label{qeqns} \\
Qk &=&r(r-\lambda )  \notag \\
Q(v-1) &=&r(b-r)  \notag \\
Q(k-1) &=&\lambda (b-r)  \notag
\end{eqnarray}%
The difference $r-\lambda $ is the \textbf{order} of $\Delta $, and $b-r$ is
the replication parameter of the complementary design. Notice that in a
genuine design, $Q\geq 0$; if $Q=0$, the design should be regarded as
degenerate.

\begin{proposition}
\label{q=0}The set $\mathcal{Q}=\left\{ \Delta \in \mathcal{D}|Q(\Delta
)=0\right\} $ is the union of four planes, one of which is the plane on the
three singular points of $\mathcal{D}$.
\end{proposition}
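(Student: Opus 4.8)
The plan is to exploit the $Q$-equations \eqref{qeqns}: although $Q=0$ is a single quadratic condition, substituting it into \eqref{qeqns} replaces it, case by case, with \emph{linear} conditions. Setting $Q=0$ in \eqref{qeqns} gives $b(r-\lambda)=r(r-\lambda)=r(b-r)=\lambda(b-r)=0$, so every $\Delta=(v,b,r,k,\lambda)\in\mathcal Q$ satisfies $r^{2}=\lambda b$ together with these four product relations. From $r(r-\lambda)=0$ comes the basic dichotomy $r=0$ or $r=\lambda$, and I would organise the whole proof around it.

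Suppose first $r=0$. Then $r^{2}=\lambda b$ forces $\lambda b=0$, and \eqref{deseq} collapses to $bk=0$ and $\lambda(v-1)=0$. If $\lambda=0$, the surviving condition $bk=0$ yields $\Pi_{0}=\{b=r=\lambda=0\}$ (with $v,k$ free) or $\Pi_{1}=\{r=k=\lambda=0\}$ (with $v,b$ free); if $\lambda\neq0$, then $\lambda b=0$ forces $b=0$ and $\lambda(v-1)=0$ forces $v=1$, landing in $\Pi_{3}=\{v=1,\ b=r=0\}$ (with $k,\lambda$ free). Suppose instead $r=\lambda\neq0$. Then $r(b-r)=0$ gives $b=r$, so $b=r=\lambda$; and $vr=bk=rk$ from \eqref{deseq} forces $v=k$, giving $\Pi_{2}=\{v=k,\ b=r=\lambda\}$. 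Conversely each $\Pi_{i}$ is readily checked to lie in $\mathcal D$ with $Q\equiv0$ on it, so $\mathcal Q=\Pi_{0}\cup\Pi_{1}\cup\Pi_{2}\cup\Pi_{3}$; and since every case produces a set of dimension at most $2$ with pairwise intersections only lines or points, this is exactly a union of four planes.

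It remains to identify $\Pi_{0}$ as the plane on the three singular points of $\mathcal D$. The singular locus consists of the points of $\mathcal D$ where the Jacobian of $\bigl(vr-bk,\ r(k-1)-\lambda(v-1)\bigr)$ has rank at most $1$, i.e.\ where its gradient rows $(r,-k,v,-b,0)$ and $(-\lambda,0,k-1,r,1-v)$ are linearly dependent. Running through the possibilities (either row zero, or the two rows proportional) and imposing membership in $\mathcal D$ leaves precisely $(0,0,0,0,0)$, $(1,0,0,1,0)$, and $(1,0,0,0,0)$. All three lie on $\Pi_{0}$, and being affinely independent they span it, so $\Pi_{0}$ is the plane in the statement.

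I expect the only real difficulty to be bookkeeping: making the case split genuinely exhaustive, checking that the degenerate sub-loci (for instance $r=b=\lambda=0$, where the design equations hold identically) fold cleanly into the $\Pi_{i}$ rather than producing spurious components, and in particular not overlooking the affine plane $\Pi_{3}$, which is cut out by $v=1$ rather than by conditions through the origin. Everything else reduces to routine substitutions into \eqref{qeqns} and \eqref{deseq}.
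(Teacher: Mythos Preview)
Your proposal is correct and follows essentially the same approach as the paper: both arguments set $Q=0$ in the $Q$-equations and in \eqref{deseq}, split on whether $r=0$ or $r\neq 0$ (equivalently $r=\lambda\neq 0$), and arrive at the same four planes, with the plane $\{b=r=\lambda=0\}$ identified as the one through the singular points. The only differences are cosmetic---your labelling of $\Pi_1,\Pi_2,\Pi_3$ is a permutation of the paper's, and you supply the Jacobian computation for the singular locus that the paper simply asserts.
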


\begin{proof}
Those singular points are $\Gamma _{0}=(0,0,0,0,0)$, $\Gamma
_{1}=(1,0,0,0,0) $, and $\Gamma _{2}=(1,0,0,1,0)$. Suppose that $Q(\Delta
)=0 $. If $r=0$, then either $\lambda =0$ or $b=0$, and by the design
equations (\ref{deseq}), either $b=0$ or $k=0$. If $\lambda \neq 0$, then $%
v=1$. If $r\neq 0$, then $\lambda \neq 0$ and $b\neq 0$, too; and the $Q$%
-equations (\ref{qeqns}) imply that $b=r=\lambda $ and then $v=k$. Putting
all this together, we get that $\mathcal{Q}$ is the union of these four
planes:%
\begin{eqnarray}
\Pi _{0} &:&b=0,\;r=0,\;\lambda =0  \label{q0planes} \\
\Pi _{1} &:&b=0,\;r=0,\;v=1  \notag \\
\Pi _{2} &:&r=0,\;\lambda =0,\;k=0  \notag \\
\Pi _{3} &:&b=r=\lambda ,\;v=k.  \notag
\end{eqnarray}%
Plane $\Pi _{0}$ is the one on the three singular points.
\end{proof}

For further planes and the design lines, consider the projection $\pi
:(v,b,r,k,\lambda )\rightarrow (b,r,\lambda )$. Then $\pi $ is one-to-one on
the subset $\mathcal{D}-\mathcal{Q}$. For if $\pi (\Delta )=\pi (\Delta
^{\prime })$ with $\Delta \neq \Delta ^{\prime }$, it can only be that $%
Q(\Delta )=Q(\Delta ^{\prime })=0$, since with $Q\neq 0$, the equations (\ref%
{qeqns}) can be solved for $v$ and $k$ in terms of $b,r,\lambda $.

\begin{lemma}
\label{onetoone}Suppose that $\Phi $ is an affine flat in $\mathcal{D}$
other than a point. Then if $Q$ is not identically 0 on $\Phi $, $\pi $ is
one-to-one on $\Phi $.
\end{lemma}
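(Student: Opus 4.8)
The plan is to argue by contradiction, reducing everything to the injectivity of $\pi$ on $\mathcal{D}-\mathcal{Q}$ that was noted just before the statement. Suppose $\pi$ is not one-to-one on $\Phi$, so there are distinct points $\Delta_{0},\Delta_{1}\in\Phi$ with $\pi(\Delta_{0})=\pi(\Delta_{1})$. Write $\Phi=p+V$, where $V\subseteq\mathbb{R}^{5}$ is the linear subspace parallel to $\Phi$, and put $d=\Delta_{1}-\Delta_{0}$. Then $d\in V$ and $d\neq 0$, and since $\Delta_{0}$ and $\Delta_{1}$ have the same image under $\pi$, the $b$-, $r$-, and $\lambda$-coordinates of $d$ all vanish; that is, $d=(d_{v},0,0,d_{k},0)$ with $(d_{v},d_{k})\neq(0,0)$, a \emph{vertical} direction along which only $v$ and $k$ move.

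The next step is to transport this vertical direction to a point of $\Phi$ where $Q$ does not vanish. Since by hypothesis $Q$ is not identically $0$ on $\Phi$, pick $\Delta^{*}\in\Phi$ with $Q(\Delta^{*})\neq 0$. Because $\Delta^{*}\in\Phi$ and $d\in V$, the whole line $M=\{\Delta^{*}+sd:s\in\mathbb{R}\}$ is contained in $\Phi$, hence in $\mathcal{D}$. Along $M$ the coordinates $b$, $r$, $\lambda$ are constant and equal to those of $\Delta^{*}$, so $Q=r^{2}-\lambda b$ is likewise constant on $M$, with value $Q(\Delta^{*})\neq 0$. Therefore $M\subseteq\mathcal{D}-\mathcal{Q}$.

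This yields the contradiction. The line $M$ contains the two distinct points $\Delta^{*}$ and $\Delta^{*}+d$ (distinct because $d\neq 0$), yet $\pi$ is constant on $M$ since $b$, $r$, $\lambda$ are; so $\pi$ fails to be one-to-one on $M$, contradicting the fact that $\pi$ is one-to-one on $\mathcal{D}-\mathcal{Q}$. Hence no such pair $\Delta_{0},\Delta_{1}$ exists, and $\pi$ is one-to-one on $\Phi$.

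I do not anticipate a real obstacle; the one point needing care is the claim that the translated line $M$ stays inside $\Phi$, and this is exactly where the hypothesis that $\Phi$ is an affine flat gets used: it guarantees that the difference vector $d$ lies in the genuine linear subspace $V$, so that a collapse of $\pi$ detected at a single pair of points propagates to the region where $Q\neq 0$, where we already know $\pi$ is injective. (The exclusion of the case $\Phi$ a single point is not needed for the argument, since $\pi$ is trivially injective there.)
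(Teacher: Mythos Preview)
Your proof is correct and is essentially the same argument as the paper's, just organized as a direct contradiction rather than as the contrapositive. The paper observes that if $\pi$ is not one-to-one on $\Phi$ then every fibre of $\pi|_\Phi$ is a nontrivial affine subspace on which $b,r,\lambda$ (hence $Q$) are constant, so by the injectivity of $\pi$ on $\mathcal{D}-\mathcal{Q}$ each fibre lies in $\mathcal{Q}$ and thus $Q\equiv 0$ on $\Phi$; you instead transport the single collapse direction $d$ to a chosen point $\Delta^{*}$ with $Q(\Delta^{*})\neq 0$ and derive the contradiction there. The underlying mechanism---that the affine-flat hypothesis lets you translate the vertical direction $d\in V$ freely, so a single failure of injectivity propagates everywhere---is identical in both versions.
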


\begin{proof}
If $\pi $ is not one-to-one on $\Phi $, then $Q$ must be identically 0 on
each fibre of $\pi $ on $\Phi $. But as $\Phi $ is the union of the fibres, $%
Q$ is identically 0 on $\Phi $.
\end{proof}

\noindent Now let $\Pi $ be a plane in $\mathcal{D}$ that is not in $%
\mathcal{Q}$. Since $\pi $ is one-to-one on $\Pi $, two of $b,r,\lambda $
are independent on $\Pi $ and the third is linear in them. It follows that $%
Q $ is quadratic on $\Pi $: if $\lambda =\alpha b+\beta r+\gamma $, then $%
Q=r^{2}-\alpha b^{2}-\beta rb-\gamma b$, while if $b=\alpha r+\beta \lambda
+\gamma $, then $Q=r^{2}-\alpha r\lambda -\beta \lambda ^{2}-\gamma \lambda $%
; and both of these expressions for $Q$ are quadratic. If $r=\alpha b+\beta
\lambda +\gamma $, then 
\begin{equation*}
Q=\alpha ^{2}b^{2}+\beta ^{2}\lambda ^{2}+(2\alpha \beta -1)b\lambda
+2\alpha \gamma b+2\beta \gamma \lambda +\gamma ^{2}.
\end{equation*}%
This fails to be quadratic only when $\alpha =\beta =2\alpha \beta -1=0$,
conditions that are inconsistent. The $Q$-equations (\ref{qeqns}) imply that 
$v$ and $k$ must be constant on $\Pi $, since the right sides have degree at
most 2. If $v\neq 0,1$, then $r=bk/v$ and $\lambda =bk(k-1)/(v(v-1))$,
contrary to the independence of two of $b,r,\lambda $. Similarly, if $k\neq
0,1$, then $r=\lambda (v-1)/(k-1)$ and $b=\lambda v(v-1)/(k(k-1))$, again
contrary to independence.

Running through the four possible combinations for the $v$ and $k$ values on 
$\Pi $, we obtain planes from three of them. However, if $v=0$ and $k=1$,
then $b=0$ and $\lambda =0$, and this defines the replicate line on $%
(0,0,1,1,0)$. The planes are given as follows:

\begin{proposition}
\label{laterplanes}There are three more $\mathcal{D}$-planes in addition to
those of Proposition \ref{q0planes}. They are:%
\begin{eqnarray}
\Pi _{4} &:&v=0,\;k=0,\ r=\lambda  \label{higherplanes} \\
\Pi _{5} &:&v=1,\ k=1,\ b=r  \notag \\
\Pi _{6} &:&v=1,\ k=0,\ r=0.  \notag
\end{eqnarray}
\end{proposition}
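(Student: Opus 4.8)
The plan is to lean entirely on the structural facts established in the paragraph preceding the statement. There it is shown that a plane $\Pi\subseteq\mathcal{D}$ on which $Q$ is not identically $0$ has $\pi$ one-to-one, carries two independent coordinates among $b,r,\lambda$ with the third linear in them, has $Q$ genuinely quadratic on it, and therefore --- since the right-hand sides of the $Q$-equations~(\ref{qeqns}) have degree at most $2$ --- forces $v$ and $k$ to be constant on $\Pi$; moreover those constants can only be $0$ or $1$, because $v\notin\{0,1\}$ (resp. $k\notin\{0,1\}$) would express $r$ and one of $b,\lambda$ rationally in the remaining parameters, destroying the independence. So the whole argument reduces to running through the four cases $(v,k)\in\{0,1\}^2$ and reading off what the design equations~(\ref{deseq}) then impose.

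For each case I would substitute the constant values of $v$ and $k$ into $vr=bk$ and $r(k-1)=\lambda(v-1)$ and record the surviving linear conditions on $(b,r,\lambda)$. The case $v=0,\;k=0$ yields $0=0$ and $r=\lambda$, giving $\Pi_4$; the case $v=1,\;k=1$ yields $b=r$ and $0=0$, giving $\Pi_5$; the case $v=1,\;k=0$ yields $r=0$ from both equations, giving $\Pi_6$. In each of these three cases exactly one independent linear relation among $b,r,\lambda$ emerges, so the flat really is $2$-dimensional, and substituting back confirms it lies in $\mathcal{D}$. The fourth case $v=0,\;k=1$ forces $b=0$ and $\lambda=0$, leaving only $r$ free: this is the replicate line through $(0,0,1,1,0)$ already identified, not a plane, so it contributes nothing new.

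It then remains to see that $\Pi_4,\Pi_5,\Pi_6$ are genuinely new, i.e. distinct from the four planes of Proposition~\ref{q=0}. This is immediate on computing $Q$ restricted to each: $Q=r(r-b)$ on $\Pi_4$, $Q=r(r-\lambda)$ on $\Pi_5$, and $Q=-\lambda b$ on $\Pi_6$, none of which vanishes identically, so none of these planes lies in $\mathcal{Q}$, whereas $\Pi_0,\dots,\Pi_3$ all do. Conversely, any plane in $\mathcal{D}$ either has $Q\equiv 0$ on it, in which case it is contained in $\mathcal{Q}$ and hence, by Proposition~\ref{q=0}, in one of $\Pi_0,\dots,\Pi_3$, or it does not, in which case the case analysis above places it in one of $\Pi_4,\Pi_5,\Pi_6$; this shows the list of seven $\mathcal{D}$-planes is exhaustive.

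I do not anticipate a real obstacle: the content is a finite case check resting on the already-proven reduction to $v,k\in\{0,1\}$. The only points needing care are bookkeeping ones --- verifying in each surviving case that the design equations produce exactly one (not zero, not two) independent linear relation among $b,r,\lambda$, so that the solution set is a plane rather than a $3$-flat or a line, and correctly discarding the degenerate case $(v,k)=(0,1)$ where it collapses to a line.
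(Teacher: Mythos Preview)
Your proposal is correct and follows essentially the same approach as the paper: the paper carries out exactly this four-case check on $(v,k)\in\{0,1\}^2$ in the paragraph preceding the proposition, with the same identification of the degenerate case $(0,1)$ as a replicate line. Your added verification that $Q$ is not identically zero on $\Pi_4,\Pi_5,\Pi_6$ (hence that they are genuinely distinct from $\Pi_0,\dots,\Pi_3$) and your explicit remark on exhaustiveness are nice touches that the paper leaves implicit.
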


\noindent Notice that the third equation in the plane definitions is a
consequence of the first two. We shall call the designs on these seven $%
\mathcal{D}$-planes the \textbf{flat} designs. All others (including some
that should clearly be regarded as degenerate) are the \textbf{bumpy}
designs. Lines in $\mathcal{D}$ other than those in the seven planes will be
called $\mathcal{D}$-\textbf{lines}.

\section{Affine automorphisms\label{sectaut}}

Now we seek the group $\mathcal{A}(\mathcal{D})$ of affine transformations
of $\mathbb{R}^{5}$ that leave $\mathcal{D}$ invariant and thus permute
design lines. We write the transformations in matrix terms, so that they
have the form $\Delta \rightarrow \Delta A+\Gamma $, where $A$ is an
invertible 5 by 5 matrix and $\Gamma $ is a design (the image of $%
(0,0,0,0,0) $). We have some automorphisms in hand: complementation $C$,
with matrix%
\begin{equation*}
\left[ 
\begin{array}{ccccc}
1 & 0 & 0 & 1 & 0 \\ 
0 & 1 & 1 & 0 & 1 \\ 
0 & 0 & -1 & 0 & -2 \\ 
0 & 0 & 0 & -1 & 0 \\ 
0 & 0 & 0 & 0 & 1%
\end{array}%
\right] .
\end{equation*}%
And we have the multiple transformation $M_{m}$, with matrix%
\begin{equation*}
\left[ 
\begin{array}{ccccc}
1 & 0 & 0 & 0 & 0 \\ 
0 & m & 0 & 0 & 0 \\ 
0 & 0 & m & 0 & 0 \\ 
0 & 0 & 0 & 1 & 0 \\ 
0 & 0 & 0 & 0 & m%
\end{array}%
\right] .
\end{equation*}%
Both of these transformations have $\Gamma =0$. The $M_{m}$ make up the 
\textbf{multiple subgroup} $\mathcal{M}(\mathcal{D})$ of $\mathcal{A}(%
\mathcal{D})$, which is centralized by $C$.

In what follows, it will help to describe the intersections of the $\mathcal{%
D}$-planes. Here is a table (completed by symmetry). For a line, we list a
typical design on it in terms of a variable component and condense the
design quintuple.%
\begin{equation}
\begin{tabular}{ccccccc}
\multicolumn{7}{c}{$\text{Table I. Plane intersections}$} \\ \hline\hline
& $\Pi _{2}$ & $\Pi _{3}$ & $\Pi _{4}$ & $\Pi _{5}$ & $\Pi _{6}$ & $\Pi _{0}$
\\ \cline{2-7}
$\Pi _{1}$ & \multicolumn{1}{|c}{$10000$} & $10010$ & $\varnothing $ & $%
1001\lambda $ & $1000\lambda $ & $100k0$ \\ 
$\Pi _{2}$ & \multicolumn{1}{|c}{} & $00000$ & $0b000$ & $\varnothing $ & $%
1b000$ & $v0000$ \\ 
$\Pi _{3}$ & \multicolumn{1}{|c}{} &  & $0bb0b$ & $1bb1b$ & $\varnothing $ & 
$v00v0$ \\ 
$\Pi _{4}$ & \multicolumn{1}{|c}{} &  &  & $\varnothing $ & $\varnothing $ & 
$00000$ \\ 
$\Pi _{5}$ & \multicolumn{1}{|c}{} &  &  &  & $\varnothing $ & $10010$ \\ 
$\Pi _{6}$ & \multicolumn{1}{|c}{} &  &  &  &  & $10000$%
\end{tabular}
\notag
\end{equation}%
Let $T:\Delta \rightarrow \Delta A+\Gamma $ be a member of $\mathcal{A}(%
\mathcal{D})$; $T$ must permute the $\mathcal{D}$-planes. As $T$ also
permutes the singular points of $\mathcal{D}$, $T$ fixes $\Pi _{0}$. With $%
S=\left\{ \Gamma _{0},\Gamma _{1},\Gamma _{2}\right\} $, we have a
homomorphism $\mathcal{A}(\mathcal{D})\rightarrow \mathrm{Sym}(S)$.
Moreover, $\Gamma =\Gamma _{0}T\in S$. The $M_{m}$ fix each the members of $%
S $ and that characterizes them:

\begin{proposition}
The kernel of the homomorphism $\mathcal{A}(\mathcal{D})\rightarrow \mathrm{%
Sym}(S)$ is $\mathcal{M}(\mathcal{D})$.
\end{proposition}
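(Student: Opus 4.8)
The plan is to show both inclusions: $\mathcal{M}(\mathcal{D})$ lies in the kernel (already essentially noted in the text, since each $M_m$ fixes $\Gamma_0,\Gamma_1,\Gamma_2$ pointwise), and conversely any $T$ in the kernel is some $M_m$. For the hard direction, let $T:\Delta\mapsto\Delta A+\Gamma$ be in the kernel. Since $\Gamma=\Gamma_0 T=\Gamma_0$, we have $\Gamma=0$, so $T$ is linear. Moreover $T$ fixes $\Gamma_1=(1,0,0,0,0)$ and $\Gamma_2=(1,0,0,1,0)$, hence fixes their difference $(0,0,0,1,0)$ and therefore fixes both of these and $(0,0,0,1,0)$; combined, $T$ fixes the first and fourth standard basis vectors $e_1=(1,0,0,0,0)$ and $e_4=(0,0,0,1,0)$. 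So two rows of $A$ (the first and fourth) are already determined: $e_1A=e_1$, $e_4A=e_4$.

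Next I would pin down the action of $T$ on the remaining coordinates by using that $T$ fixes $\Pi_0$ and must permute the seven $\mathcal{D}$-planes, together with the intersection data in Table I. Since $T$ fixes the three singular points, it fixes $\Pi_0$, and it must carry the set $\{\Pi_1,\dots,\Pi_6\}$ to itself. The key observation is that $\Pi_1$, $\Pi_2$, $\Pi_6$ are the planes through $\Gamma_1$ (respectively distinguished by their other incidences in Table I) and similarly for $\Gamma_2$ via $\Pi_3,\Pi_5$; since $T$ fixes each $\Gamma_i$, the incidence pattern in Table I forces $T$ to fix each $\Pi_j$ individually. In particular $T$ fixes $\Pi_0:b=r=\lambda=0$, $\Pi_2:r=\lambda=k=0$ and $\Pi_4:v=k=0,\ r=\lambda$; intersecting $\Pi_0\cap\Pi_4$ type information and the independent linear forms cutting out these planes shows that $T$ preserves each of the coordinate hyperplanes $\{b=0\}$, $\{r=0\}$, $\{\lambda=0\}$, $\{v=0\}$, $\{k=0\}$ up to the already-known fixing of $v$ and $k$. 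Consequently $A$ is diagonal: $A=\mathrm{diag}(1,\beta,\rho,1,\ell)$ for scalars $\beta,\rho,\ell$.

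Finally I would feed this diagonal form back into the defining equations of $\mathcal{D}$. Invariance of $\mathcal{D}$ under $T$ means that whenever $vr=bk$ and $r(k-1)=\lambda(v-1)$, we also have $v(\rho r)=(\beta b)k$ and $(\rho r)(k-1)=(\ell\lambda)(v-1)$. Plugging in the original relations gives $\rho\, bk=\beta\, bk$ on all of $\mathcal{D}$ (forcing $\rho=\beta$ by choosing a design with $bk\neq0$) and $\rho\,\lambda(v-1)=\ell\,\lambda(v-1)$ (forcing $\ell=\rho$). Writing $m=\beta=\rho=\ell$ we get $A$ equal to the matrix of $M_m$, and $m\neq0$ since $A$ is invertible. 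Hence $T=M_m\in\mathcal{M}(\mathcal{D})$, completing the proof.

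The main obstacle I anticipate is the middle step: rigorously deducing from Table I that $T$ must fix \emph{each} $\mathcal{D}$-plane rather than merely permuting them, and then converting ``fixes these planes'' into ``$A$ is diagonal.'' One must check that no nontrivial permutation of $\{\Pi_1,\dots,\Pi_6\}$ is consistent with $T$ fixing all of $\Gamma_0,\Gamma_1,\Gamma_2$ and respecting the pattern of which intersections are empty and which pass through which $\Gamma_i$ — the asymmetry of the table (e.g. $\Pi_1\cap\Pi_4=\varnothing$ while $\Pi_1\cap\Pi_5\ne\varnothing$, and the $\Gamma_i$-content of each nonempty cell) should make this forced, but it requires a short careful case check. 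Once the planes are individually fixed, diagonality follows because the seven planes are cut out by the coordinate forms $v,b,r,k,\lambda$ (and $r-\lambda$, which is then also preserved), and an invertible linear map fixing each of enough coordinate hyperplanes is diagonal.
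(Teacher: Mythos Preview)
Your overall strategy matches the paper's: show $T$ is linear, fixes $\Pi_0$ pointwise, hence fixes each $\mathcal{D}$-plane individually, then extract the matrix of $T$ and use invariance of $\mathcal{D}$ to pin down the scalars. The paper even uses the same shortcut you propose for showing each plane is individually fixed (the six other planes are distinguished by their distinct intersections with $\Pi_0$, which $T$ fixes pointwise).

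However, the step ``fixing all seven planes $\Rightarrow$ $A$ is diagonal'' is false, and this is precisely the obstacle you flagged. Working through the plane intersections as the paper does (e.g.\ $\Pi_2\cap\Pi_4$, $\Pi_3\cap\Pi_5$, $\Pi_1\cap\Pi_5$) yields only
\[
A=\begin{pmatrix}
1 & 0 & 0 & 0 & 0\\
0 & x & 0 & 0 & 0\\
0 & y-x & y & 0 & y-z\\
0 & 0 & 0 & 1 & 0\\
0 & 0 & 0 & 0 & z
\end{pmatrix},
\]
and one checks directly that for arbitrary $x,y,z$ this matrix \emph{does} fix every $\Pi_i$ setwise. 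So the planes alone cannot force $y=x$ or $y=z$; equivalently, $T$ need not preserve the hyperplane $\{b=0\}$ (since $b'=xb+(y-x)r$), and your passage from ``fixes the $\Pi_i$'' to ``preserves the coordinate hyperplanes'' does not go through.

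The good news is that your final step survives with this corrected matrix. Writing $b'=xb+(y-x)r$, $r'=yr$, $\lambda'=(y-z)r+z\lambda$ and imposing $v'r'=b'k'$ on $\mathcal{D}$ gives $(y-x)k(b-r)=0$ identically on $\mathcal{D}$, forcing $y=x$; similarly the second design equation forces $y=z$. The paper does this by evaluating at the single design $(3,3,2,2,1)$; your generic argument is a clean alternative. So the fix is simply to replace the unjustified diagonality claim by the three-parameter matrix above, and then run your last paragraph.
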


\begin{proof}
Let $M:\Delta \rightarrow \Delta A+\Gamma $ be in the kernel. Then $\Gamma
=\Gamma _{0}M=\Gamma _{0}$. As the affine transformation $M$ fixes each
member of the spanning set $S$ of $\Pi _{0}$, $M$ fixes $\Pi _{0}$
pointwise. Since the other planes are determined by their intersections with 
$\Pi _{0}$, $M$ fixes all the $\mathcal{D}$-planes. Then appropriate
intersections lead to further images for $M$: 
\begin{equation*}
\begin{tabular}{cc}
intersection & effect of $M$ \\ 
$\Pi _{2}\cap \Pi _{4}$ & $(0,1,0,0,0)\rightarrow (0,x,0,0,0)$ \\ 
$\Pi _{3}\cap \Pi _{5}$ & $(1,1,1,1,1)\rightarrow (1,y,y,1,y)$ \\ 
$\Pi _{1}\cap \Pi _{5}$ & $(1,0,0,1,1)\rightarrow (1,0,0,1,z)$%
\end{tabular}%
,
\end{equation*}%
for some $x,y,z$. Working out combinations from the images now in hand gives%
\begin{equation*}
M=\left[ 
\begin{array}{ccccc}
1 & 0 & 0 & 0 & 0 \\ 
0 & x & 0 & 0 & 0 \\ 
0 & y-x & y & 0 & y-z \\ 
0 & 0 & 0 & 1 & 0 \\ 
0 & 0 & 0 & 0 & z%
\end{array}%
\right] .
\end{equation*}%
Then, for instance, the image of the complete design $(3,3,2,2,1)$ is $%
(3,x+2y,2y,2,2y-z)$. For the design equations on this image we get $%
0=vr-bk=2(y-x)$ and $0=\lambda (v-1)-r(k-1)=2(y-z)$; so $x=y=z$ and $M=$ $%
M_{x}$.
\end{proof}

Already having the transposition $(\Gamma _{0},\Gamma _{1})$ produced by $C$%
, we obtain the following:

\begin{proposition}
The image of $\mathcal{A}(\mathcal{D})$ in $\mathrm{Sym}(S)$ is all of $%
\mathrm{Sym}(S)$. In addition, $\mathcal{A}(\mathcal{D})=\mathcal{M}(%
\mathcal{D})\times \left\langle C,N\right\rangle $, with $N$ as below.
\end{proposition}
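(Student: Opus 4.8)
The plan is to build on the preceding proposition, which shows that $\mathcal{M}(\mathcal{D})$ is the kernel of the homomorphism $\rho\colon\mathcal{A}(\mathcal{D})\to\mathrm{Sym}(S)$. Since $C$ already contributes one transposition to the image and any two distinct transpositions generate $\mathrm{Sym}(S)\cong S_{3}$, to get surjectivity of $\rho$ it suffices to produce a single further automorphism $N$ with $\rho(N)$ a transposition different from $\rho(C)$. I would then show that $\langle C,N\rangle$ is a complement to $\mathcal{M}(\mathcal{D})$ that centralizes it, so that $\mathcal{A}(\mathcal{D})$ is the internal direct product of the two.

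To find $N$, recall that every $T\in\mathcal{A}(\mathcal{D})$ fixes $\Pi_{0}$ and permutes the seven $\mathcal{D}$-planes consistently with their intersections and with their incidences with $S$, as displayed in Table I. Aiming for $\rho(N)=(\Gamma_{0},\Gamma_{1})$ (so that $N$ fixes $\Gamma_{2}$), this data forces $N$ to fix $\Pi_{0},\Pi_{2},\Pi_{5}$ and to interchange $\Pi_{1}\leftrightarrow\Pi_{3}$ and $\Pi_{4}\leftrightarrow\Pi_{6}$; following the distinguished lines in these planes through $N$ determines its linear part up to a couple of scalars, and imposing the two equations of (\ref{deseq}) on the image — using the complement identity $(b-r)(v-k-1)=(b-2r+\lambda)(v-1)$, which holds because $\mathcal{D}$ is $C$-invariant — pins these down. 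I expect the construction to yield
\begin{equation*}
N\colon(v,b,r,k,\lambda)\mapsto(1+k-v,\;b-2r+\lambda,\;\lambda-r,\;k,\;\lambda),
\end{equation*}
that is, $\Delta\mapsto\Delta A+\Gamma_{1}$ with $\Gamma_{1}=(1,0,0,0,0)$ and
\begin{equation*}
A=\left[\begin{array}{ccccc}-1 & 0 & 0 & 0 & 0\\ 0 & 1 & 0 & 0 & 0\\ 0 & -2 & -1 & 0 & 0\\ 1 & 0 & 0 & 1 & 0\\ 0 & 1 & 1 & 0 & 1\end{array}\right].
\end{equation*}
A direct substitution into (\ref{deseq}) confirms $N\in\mathcal{A}(\mathcal{D})$, and one checks at once that $N^{2}=1$, that $\det A=1$, and that $\rho(N)=(\Gamma_{0},\Gamma_{1})$. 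As $\rho(C)$ fixes $\Gamma_{0}$ (the matrix $C$ being linear), $\rho(N)\neq\rho(C)$, so $\rho$ is onto $\mathrm{Sym}(S)$.

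Next I would identify $\langle C,N\rangle$. Both $C$ and $N$ are involutions, so $\langle C,N\rangle$ is dihedral of order twice that of $CN$; a short computation gives $(CN)^{3}=1$ with $CN\neq 1$, hence $CN$ has order $3$ and $\langle C,N\rangle\cong S_{3}$ has order $6$. Since $\rho(\langle C,N\rangle)=\mathrm{Sym}(S)$ already has order $6$, $\rho$ restricts to an isomorphism on $\langle C,N\rangle$, so $\langle C,N\rangle\cap\mathcal{M}(\mathcal{D})=\langle C,N\rangle\cap\ker\rho=1$. A one-line matrix check shows $NM_{m}=M_{m}N$ for all $m$, and since the text records that $C$ centralizes $\mathcal{M}(\mathcal{D})$, the subgroup $\langle C,N\rangle$ centralizes $\mathcal{M}(\mathcal{D})$. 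Now $\mathcal{M}(\mathcal{D})$ is normal (a kernel), and $\mathcal{M}(\mathcal{D})\langle C,N\rangle=\mathcal{A}(\mathcal{D})$: given $T$, pick $\sigma\in\langle C,N\rangle$ with $\rho(\sigma)=\rho(T)$, so $T\sigma^{-1}\in\ker\rho=\mathcal{M}(\mathcal{D})$. Thus $\mathcal{A}(\mathcal{D})=\mathcal{M}(\mathcal{D})\rtimes\langle C,N\rangle$, and because $\langle C,N\rangle$ centralizes $\mathcal{M}(\mathcal{D})$ the action is trivial, giving $\mathcal{A}(\mathcal{D})=\mathcal{M}(\mathcal{D})\times\langle C,N\rangle$.

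The main obstacle is the construction of $N$: surjectivity and the product structure are routine once $N$ is in hand, but the representative must be chosen carefully. For instance the natural-looking involution $\Delta\mapsto(v,-b,r-b,v-k,2r-b-\lambda)$ does lie in $\mathcal{A}(\mathcal{D})$ and does induce a transposition, yet it equals $CM_{-1}$, so for that choice $\langle C,N\rangle=\langle C,M_{-1}\rangle$ contains $M_{-1}$ and meets $\mathcal{M}(\mathcal{D})$ nontrivially — the complement would be lost. What saves the argument is that a workable $N$ must move $\Gamma_{0}$, and this is exactly what forces the affine translation by $\Gamma_{1}$ in the $N$ above.
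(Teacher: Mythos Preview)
Your proof is correct and follows the same overall strategy as the paper: exhibit a second involution $N\in\mathcal{A}(\mathcal{D})$ inducing a transposition of $S$ distinct from that of $C$, then combine with the previous proposition to obtain the direct product. The difference is only in the choice of $N$. The paper takes
\[
N_{p}:(v,b,r,k,\lambda)\mapsto(1-k,\lambda,r,1-v,b),
\]
with the one-line justification that this map visibly swaps the two defining equations $vr=bk$ and $r(k-1)=\lambda(v-1)$; it induces $(\Gamma_{0},\Gamma_{2})$. Your $N$ is the conjugate $CN_{p}C$, inducing $(\Gamma_{0},\Gamma_{1})$, and you arrive at it more laboriously by chasing the forced plane permutation. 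Since $\langle C,N\rangle=\langle C,CN_{p}C\rangle=\langle C,N_{p}\rangle$, the two choices generate the same complement and the conclusions agree. The paper's $N$ buys a cleaner verification that $N\in\mathcal{A}(\mathcal{D})$ and avoids any computation to find it; your approach, while heavier, is more systematic and your treatment of the direct-product step (showing $(CN)^{3}=1$, checking $NM_{m}=M_{m}N$, and deducing $\langle C,N\rangle\cap\ker\rho=1$ from the order count) is more explicit than the paper's ``easy to verify.'' Incidentally, your observation that $\rho(C)$ fixes $\Gamma_{0}$ is right, and it quietly corrects a slip in the text just before the proposition, where $C$ is said to give $(\Gamma_{0},\Gamma_{1})$ rather than $(\Gamma_{1},\Gamma_{2})$.
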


\begin{proof}
All that is needed is another transposition. The transformation%
\begin{equation*}
N:(v,b,r,k,\lambda )\rightarrow (1-k,\lambda ,r,1-v,b)
\end{equation*}%
is visibly in $\mathcal{A}(\mathcal{D})$ since it switches the two equations
in (\ref{deseq}) defining $\mathcal{D}$. Like $C$, $N$ centralizes $\mathcal{%
M}(\mathcal{D})$. One has $\Gamma _{0}N=\Gamma _{2}$ and $N$ produces the
transposition $(\Gamma _{0},\Gamma _{2})$. In matrix terms, $N$ is the
transformation 
\begin{equation*}
\Delta \rightarrow \Delta \left[ 
\begin{array}{ccccc}
0 & 0 & 0 & -1 & 0 \\ 
0 & 0 & 0 & 0 & 1 \\ 
0 & 0 & 1 & 0 & 0 \\ 
-1 & 0 & 0 & 0 & 0 \\ 
0 & 1 & 0 & 0 & 0%
\end{array}%
\right] +\Gamma _{2}.
\end{equation*}%
Unfortunately, if $\Delta $ is a genuine design, $\Delta N$ is not!

That $\mathcal{A}(\mathcal{D})=\mathcal{M}(\mathcal{D})\left\langle
C,T\right\rangle $ now follows from the preceding proposition, and the
product's being direct is easy to verify. Of course, $\left\langle
C,N\right\rangle \simeq S_{3}$.
\end{proof}

\noindent The permutations of the $\mathcal{D}$-planes produced by $C$ is $%
(23)(56)$ and by $N$ it is $(12)(45)$. Moreover, if $T\in \left\langle
C,N\right\rangle $, $Q(\Delta T)=Q(\Delta )$.

\section{Lines in $\mathcal{D}$\label{sectlines}}

Each flat design is on an infinite number of lines in $\mathcal{D}$ lying in
whatever plane contains the design, and every design not in $\Pi _{0}$ lies
on a replicate line in $\mathcal{D}$. We shall refine the search for other
lines by the nature of the restriction of $Q$ to a line. Lines for which $Q$
is identically 0 are the lines in $\mathcal{Q}$, so that we may assume that $%
Q$ is not identically 0 on a line $\Lambda $ under scrutiny. By Lemma \ref%
{onetoone}, $\pi $ is one-to-one on $\Lambda $, and some one of $b,r,\lambda 
$ could be used as a variable for $\Lambda $.

\subsection{Quadratic $Q$}

\begin{proposition}
\label{qquadline}Suppose that $Q$ is quadratic, in a chosen variable, on a
line $\Lambda $ in $\mathcal{D}$. Then $\Lambda $ is either a replicate line
or is in one of the $\mathcal{D}$-planes.
\end{proposition}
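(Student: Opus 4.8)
The plan is to force $v$ and $k$ to be constant along $\Lambda$ via the $Q$-equations \eqref{qeqns}, and then read off the shape of $\Lambda$ from the two design equations \eqref{deseq} by a short case analysis on those constant values. To set up: since $Q$ is not identically $0$ on $\Lambda$, Lemma \ref{onetoone} gives that $\pi$ is one-to-one on $\Lambda$, so the chosen variable $u$ (one of $b,r,\lambda$, necessarily nonconstant on $\Lambda$ if $Q$ is to be quadratic in it) is an affine parameter for $\Lambda$; hence $b,r,\lambda$, and also $v$ and $k$, are affine functions of $u$, say $v=v_{0}+v_{1}u$ and $k=k_{0}+k_{1}u$. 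Now $Q$ has degree exactly $2$ in $u$, so in $Qv=b(r-\lambda)$ the left side has degree $2+\deg v$ while the right side has degree at most $2$; comparing the top coefficients (the leading coefficient of $Q$ is nonzero) forces $v_{1}=0$, and likewise $Qk=r(r-\lambda)$ forces $k_{1}=0$. Thus $v\equiv v_{0}$ and $k\equiv k_{0}$ are constants.

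With $v,k$ constant I would substitute into $v_{0}r=bk_{0}$ and $r(k_{0}-1)=\lambda(v_{0}-1)$, viewed as linear relations among $b,r,\lambda$, and split on whether $v_{0},k_{0}\in\{0,1\}$. If $v_{0}\notin\{0,1\}$, the first relation gives $r=(k_{0}/v_{0})b$ and the second then makes $\lambda$ a fixed multiple of $b$, so $b,r,\lambda$ are all proportional; since $\Lambda$ is a line (not a point) the common factor is a nonconstant parameter, so $\Lambda$ is a multiple line, i.e.\ a replicate line. The symmetric cases with $k_{0}\notin\{0,1\}$ follow the same way (solving for $b,r$ in terms of $\lambda$); alternatively one applies the automorphism $N$, which preserves $Q$, permutes the $\mathcal{D}$-planes, and---centralizing $\mathcal{M}(\mathcal{D})$---carries replicate lines to replicate lines, while swapping the roles of $v$ and $1-k$. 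There remain the four cases $(v_{0},k_{0})\in\{0,1\}^{2}$, each read off directly: $(0,0)$ forces $r=\lambda$, so $\Lambda\subseteq\Pi_{4}$; $(1,1)$ forces $b=r$, so $\Lambda\subseteq\Pi_{5}$; $(1,0)$ forces $r=0$, so $\Lambda\subseteq\Pi_{6}$; and $(0,1)$ forces $b=0$ and $\lambda=0$, so $\Lambda=\{(0,0,r,1,0):r\in\mathbb{R}\}$, the replicate line on $(0,0,1,1,0)$. In every case $\Lambda$ is a replicate line or lies in one of $\Pi_{4},\Pi_{5},\Pi_{6}$.

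This argument is essentially bookkeeping, and I do not expect a genuine obstacle. The step deserving care is the degree count in the first paragraph: one must genuinely know that $v$ and $k$ are affine in the line parameter (they are, since any coordinate of a line is an affine function of a line parameter, and $u$ is such a parameter), and one uses that ``$Q$ quadratic in the chosen variable'' means precisely that its leading coefficient is nonzero, which is exactly what makes the cubic terms in $Qv$ and $Qk$ force $v_{1}=k_{1}=0$. A secondary small check is that in each proportional case the common factor really is a nonconstant parameter, so that $\Lambda$ is a full replicate line and not a single design; this is automatic, since otherwise $\Lambda$ would degenerate to a point, which is excluded.
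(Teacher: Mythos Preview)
Your proof is correct and follows essentially the same route as the paper's: use the $Q$-equations and a degree count to force $v$ and $k$ constant, then case-split on whether $v,k\in\{0,1\}$, with the non-boundary cases giving replicate lines via the design equations and the four boundary cases landing in $\Pi_{4},\Pi_{5},\Pi_{6}$ or the exceptional replicate line on $(0,0,1,1,0)$. Your write-up is more explicit than the paper's (which is quite terse and refers back to the plane analysis after Lemma~\ref{onetoone}); the optional use of $N$ is a pleasant touch but not needed, and your care about why $u$ is a genuine affine parameter is appropriate.
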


\begin{proof}
By the $Q$-equations (\ref{qeqns}), $v$ and $k$ must be constant on $\Lambda 
$ as with the analogous plane, since the degrees of the right sides are at
most 2. As in the comments after Lemma \ref{onetoone}, if $v\neq 0,1$, we
obtain $r=bk/v,\;\lambda =bk(k-1)/(v(v-1))$; and if $k\neq 0,1$, $b=\lambda
v(v-1)/(k(k-1)),\;r=\lambda (v-1)/(k-1)$. Both of these possibilities give
replicate lines. Thus we may take $v=0$ or $1$ and $k=0$ or $1$. All but $%
v=0,k=1$ produce a line in one of $\Pi _{4}$, $\Pi _{5}$, or $\Pi _{6}$.
When $v=0,\;k=1$, the line is the replicate line $v=b=0,\;k=1,\;\lambda =0$
mentioned earlier.
\end{proof}

\subsection{Constant $Q$}

Now let $\Lambda $ be a line in $\mathcal{D}$ on which $Q$ is a nonzero
constant. If $r$ is constant on $\Lambda $, then one of $b$ or $\lambda $
must be 0 (otherwise $Q$ would have positive degree). If $\lambda =0$ on $%
\Lambda $, then by $Q(k-1)=\lambda (b-r)$ in (\ref{qeqns}), $k=1$ and then $%
b=vr$. Since one of $r,b,\lambda $ can serve as a variable on $\Lambda $, it
must be that $r\neq 0$ and we can use $v$ as the variable. The line is%
\begin{equation*}
\Lambda _{0}:b=fv,\;r=f,\;k=1,\;\lambda =0
\end{equation*}%
with $f\neq 0$ (the choice of notation here and for the other constant $Q$
lines will be explained later). If $r$ is constant and $b=0$ on $\Lambda $,
then $\lambda $ can be the line variable. This time $Qv=b(r-\lambda )$
implies that $v=0$; $Q(v-1)=r(b-r)$ gives $r\neq 0$. The line is%
\begin{equation*}
\Lambda _{P}:v=0,\;b=0,\;r=-p,\;k=1+\lambda /p.
\end{equation*}%
Finally, suppose that $r$ is not constant on $\Lambda $, so that as $Q$\emph{%
\ is} constant, neither $b$ nor $\lambda $ is. Now the $Q$-equations imply
that both $r-\lambda $ and $b-r$ are constant. Put $b=\lambda +e$, $%
r=\lambda +f$, making $Q=(2f-e)\lambda +f^{2}$; the constancy of $Q$
requires $e=2f\neq 0$. Thus the line is 
\begin{equation*}
\Lambda _{1}:v=\lambda /f+2,\;b=\lambda +2f,\;r=\lambda +f,\;k=\lambda /f+1.
\end{equation*}%
These three types of lines are indeed in $\mathcal{D}$, and we collect them
here:

\begin{proposition}
\label{qconstline}There are three one-parameter families of lines in $%
\mathcal{D}$ on which $Q$ is a nonzero constant:%
\begin{eqnarray*}
\Lambda _{P}(p) &:&v=0,\;b=0,\;r=-p,\;k=1+\lambda /p;\;p\neq 0 \\
\Lambda _{0}(f) &:&b=fv,\;r=f,\;k=1,\;\lambda =0;\;f\neq 0 \\
\Lambda _{1}(f) &:&v=\lambda /f+2,\;b=\lambda +2f,\;r=\lambda +f,\;k=\lambda
/f+1;\;f\neq 0.
\end{eqnarray*}
\end{proposition}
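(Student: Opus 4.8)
I would read the statement as having two halves: a \emph{classification} --- the three listed families exhaust the lines of $\mathcal{D}$ on which $Q$ is a nonzero constant --- and a \emph{verification} --- each listed family is indeed such a line. For the classification I would retrace the case analysis that precedes the statement, organized by whether the parameter $r$ is constant along the line $\Lambda$. Two structural facts drive every case. First, since $Q$ is not identically $0$ on $\Lambda$, Lemma \ref{onetoone} gives that $\pi$ is one-to-one on $\Lambda$, so some one of $b,r,\lambda$ serves as an affine parameter $t$ for $\Lambda$; then $Q=r^{2}-\lambda b$ is a polynomial of degree at most $2$ in $t$, and requiring $Q$ to be a nonzero constant forces the $t^{2}$- and $t$-coefficients of that polynomial to vanish. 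Second, in the $Q$-equations (\ref{qeqns}) the left sides are affine in $t$ (as $Q$ is constant and $v,k$ are affine in $t$) while the right sides are products of two affine functions of $t$; matching degrees forces, in each such equation, one factor of the product --- some quantity among $r-\lambda$, $b-r$, $v$, $k$ --- to be constant, and the casework exploits this.

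Concretely: if $r$ is constant, then $Q=r^{2}-\lambda b$ constant makes $\lambda b$ constant, and since $\pi$ is injective on the genuine line $\Lambda$ the triple $(b,r,\lambda)$ cannot be constant, so exactly one of $b,\lambda$ varies and the other vanishes identically; substituting $b\equiv 0$ into (\ref{deseq}) and (\ref{qeqns}) yields $\Lambda_{P}(p)$ with $p=-r\neq 0$, and substituting $\lambda\equiv 0$ yields $\Lambda_{0}(f)$ with $f=r\neq 0$. If $r$ is nonconstant, then neither $b$ nor $\lambda$ can be constant (otherwise $Q$ would inherit the nonzero $t^{2}$-term of $r^{2}$), and the $Q$-equations force $r-\lambda$ and $b-r$ to be constant; writing $b=\lambda+e$ and $r=\lambda+f$ turns $Q$ into $(2f-e)\lambda+f^{2}$, so $e=2f$ and $f\neq 0$, and substituting back into two of the $Q$-equations recovers $v=\lambda/f+2$ and $k=\lambda/f+1$, i.e.\ $\Lambda_{1}(f)$.

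For the verification half I would simply substitute each family into the defining equations $vr=bk$ and $r(k-1)=\lambda(v-1)$ and check that both reduce to algebraic identities, and then compute $Q=r^{2}-\lambda b$, obtaining $p^{2}$ for $\Lambda_{P}(p)$ and $f^{2}$ for both $\Lambda_{0}(f)$ and $\Lambda_{1}(f)$ --- nonzero exactly because $p\neq 0$ and $f\neq 0$; one also observes that in each family one coordinate (namely $\lambda$ for $\Lambda_{P}$ and $\Lambda_{1}$, and $v$ for $\Lambda_{0}$) is a free affine parameter on which at least one other coordinate genuinely depends, so each member is a line and not a point. These computations are routine. The one place that warrants care is the classification half: one must check that the casework on the behaviour of $r$ is exhaustive, and must correctly exclude the degenerate collapses in which a candidate ``line'' would shrink to a point or $Q$ would be forced to $0$. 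Beyond this bookkeeping I would not expect a genuine obstacle.
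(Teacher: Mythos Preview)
Your proposal is correct and follows essentially the same route as the paper: the classification is done by the identical case split on whether $r$ is constant along $\Lambda$, using injectivity of $\pi$ (Lemma \ref{onetoone}) and degree-matching in the $Q$-equations, and the verification is the routine substitution you describe. The only cosmetic difference is that the paper leaves the final check that the three families lie in $\mathcal{D}$ implicit (``these three types of lines are indeed in $\mathcal{D}$''), whereas you spell out that $Q$ equals $p^{2}$ or $f^{2}$ and that the design equations hold identically.
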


\noindent The designs on the first two types of lines clearly rate as
degenerate; and those on a $\Lambda _{1}$-line are complements of designs on 
$\Lambda _{0}$-lines (with the same value of $f$).

\subsection{Linear $Q$}

We come to the most interesting lines, those on which $Q$ is linear (in a
chosen line variable). We analyze some preliminary possibilities. For
reference, here are the $Q$-equations (\ref{qeqns}) again:%
\begin{eqnarray*}
Qv &=&b(r-\lambda ) \\
Qk &=&r(r-\lambda ) \\
Q(v-1) &=&r(b-r) \\
Q(k-1) &=&\lambda (b-r).
\end{eqnarray*}%
Suppose first that $r$ is constant on the line $\Lambda $ in $D$. Then $%
Q=r^{2}-\lambda b$ implies that one of $\lambda $ or $b$ must be a nonzero
constant and the other linear. If $\lambda $ is constant, then $%
Qk=r(r-\lambda )$ requires $k=0$ and either $r=0$ or $r=\lambda $. If $r=0$,
then $r(k-1)=\lambda (v-1)$ gives $v=1$ and $\Lambda \subset \Pi _{6}$, from
(\ref{higherplanes}), with $Q=-\lambda b$. If $r=\lambda $, then $%
Qv=b(r-\lambda )$ makes $v=0$ and $\Lambda \subset \Pi _{4}$, with $Q=r(r-b)$%
. On the other hand, if $b$ is constant, then $Q(v-1)=r(b-r)$ implies that $%
v=1$ and either $r=0$ or $b=r$. If $r=0$, we are back at $k=0$, and $\Lambda
\subset \Pi _{6}$. If $b=r$, then $Q(k-1)=\lambda (b-r)$ gives $k=1$, and $%
\Lambda \subset \Pi _{5}$; $Q=r(r-\lambda )$.

Now suppose that $r$ is not constant on $\Lambda $, so that neither $\lambda 
$ nor $b$ is constant, but suppose also that one of $r-\lambda $ or $b-r$ is
constant. If $r-\lambda $ is constant, then $Qv=b(r-\lambda )$ and $%
Qk=r(r-\lambda )$ imply that $v$ and $k$ are constant, and then so is $b-r$.
Similarly, if $b-r$ is constant, then $v$, $k$, and $r-\lambda $ are
constant. Thus we are supposing that all of $r-\lambda ,b-r,v$, and $k$ are
constant. As above, put $b=\lambda +e$ and $r=\lambda +f$. Again, $%
Q=(2f-e)\lambda +f^{2}$, but now it must be that $2f-e\neq 0$. The design
equation $vr=bk$ reads $v\lambda +vf=k\lambda +ke$, which requires $v=k$ and
then $v(f-e)=0$. Similarly, $r(k-1)=\lambda (v-1)$ becomes $(v-1)f=0$. By $%
v(f-e)=0$ we have either $v=0$ or $f=e$. If $v=0$, then $f=0$, and the line
is in $\Pi _{4}$. If $v\neq 0$, then $f=e$; and $(v-1)f=0$ means either $v=1$
or $f=0$. If $v=1$, then $k=1$ and the line is in $\Pi _{5}$. But if $v\neq
1 $, then $f=e=0$. So $b=r=\lambda $, and that would actually make $Q=0$.

Therefore the new possibilities are that \emph{all} of $b,r,\lambda
,r-\lambda ,$ and $b-r$ are linear on $\Lambda $. The $Q$-equations then
imply that $v$ and $k$ are also linear. The factors on the right in the $Q$%
-equations must match those on the left up to constant multiples. There are
three possibilities for $Q$: a multiple of $b$ and $r$; a multiple of $r$
and $\lambda $; and a multiple of $r-\lambda $ and $b-r$. Each choice gives
a two-parameter family of lines, and we label the families and the
parameters as follows:

\begin{eqnarray}
P(f,p) &:&Q=(f-p)r=p(b-r)  \label{lineeqs} \\
F_{0}(f,p) &:&Q=(f-p)r=f(r-\lambda )  \notag \\
F_{1}(f,p) &:&Q=f(r-\lambda )=p(b-r)  \notag
\end{eqnarray}%
For the first family we have used $r$ and $b-r$ instead of $b$ and $r$; and
in the second, $r$ and $r-\lambda $. This was done for the evident
uniformity. Both $f$ and $p$ are nonzero, and $f\neq p$. The three line
families of Proposition \ref{qconstline} can be regarded as limiting cases
for the families just obtained: 
\begin{eqnarray}
\Lambda _{P}(p) &=&P(0,p)  \label{limitlines} \\
\Lambda _{0}(f) &=&F_{0}(f,0)  \notag \\
\Lambda _{1}(f) &=&F_{1}(f,f).  \notag
\end{eqnarray}%
To see that the $P$-, $F_{0}$-, and $F_{1}$-lines just obtained genuinely
lie in $\mathcal{D}$, we describe them using $Q$ itself as the line
variable, with $d=f-p$:%
\begin{equation}
\begin{tabular}{cccc}
\multicolumn{4}{c}{Table II. Line equations in $Q$} \\ \hline\hline
& $P(f,p)$ & $F_{0}(f,p)$ & $F_{1}(f,p)$ \\ \cline{2-4}
$v=$ & \multicolumn{1}{|c}{$\dfrac{Q}{pd}+1$} & $\dfrac{Q}{pd}-\dfrac{d}{p}$
& $\dfrac{Q}{pd}-\dfrac{p}{d}$ \\ 
$b=$ & \multicolumn{1}{|c}{$\dfrac{fQ}{pd}$} & $\dfrac{fQ}{pd}-\dfrac{fd}{p}$
& $\dfrac{fQ}{pd}-\dfrac{fp}{d}$ \\ 
$r=$ & \multicolumn{1}{|c}{$\dfrac{Q}{d}$} & $\dfrac{Q}{d}$ & $\dfrac{Q}{d}-%
\dfrac{fp}{d}$ \\ 
$k=$ & \multicolumn{1}{|c}{$\dfrac{Q}{fd}+\dfrac{p}{f}$} & $\dfrac{Q}{fd}$ & 
$\dfrac{Q}{fd}-\dfrac{p}{d}$ \\ 
$\lambda =$ & \multicolumn{1}{|c}{$\dfrac{pQ}{fd}-\dfrac{pd}{f}$} & $\dfrac{%
pQ}{fd}$ & $\dfrac{pQ}{fd}-\dfrac{fp}{d}$%
\end{tabular}
\label{qparam}
\end{equation}%
Notice that the three lines with the same $f$ and $p$ are parallel. The $P$%
-, $F_{0}$-, and $F_{1}$-lines are determined by their intersections with $%
\Pi _{1}$, $\Pi _{2}$, and $\Pi _{3}$, respectively. They do not meet $\Pi
_{0}$.

\begin{theorem}
Each bumpy design is on exactly four lines in $\mathcal{D}$.
\end{theorem}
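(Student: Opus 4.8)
The plan is to classify, for a fixed bumpy design $\Delta$, all lines in $\mathcal{D}$ through $\Delta$ by the behavior of $Q$ restricted to the line, exactly the trichotomy used to enumerate line families: $Q$ constant, $Q$ quadratic, or $Q$ linear in a line variable (the identically-zero case being excluded since $\Delta \notin \mathcal{Q}$). First I would recall that a line through $\Delta$ is determined by $\Delta$ together with a direction vector, and that $\pi$ is one-to-one on any such line by Lemma \ref{onetoone}, so the line is recovered from its image $\pi(\Lambda)$, an affine line in the $(b,r,\lambda)$-space through $\pi(\Delta)$. Thus counting lines through $\Delta$ amounts to counting admissible directions in $(b,r,\lambda)$-space, i.e.\ those for which the resulting curve in $\mathbb{R}^5$ is actually a line lying in $\mathcal{D}$.

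Next I would dispose of the easy cases. The multiple line $\Delta M_m$ and the replicate line through $\Delta$ are two of the four; since $\Delta$ is bumpy it lies on no $\mathcal{D}$-plane, so these two lines are genuinely distinct and we must show there are exactly two more. The constant-$Q$ families ($\Lambda_P$, $\Lambda_0$, $\Lambda_1$) and the quadratic-$Q$ case all force the design onto a $\mathcal{D}$-plane or a replicate line, by Proposition \ref{qconstline}'s analysis and Proposition \ref{qquadline}; so a bumpy $\Delta$ lies on none of those except possibly its replicate line. Hence every line through $\Delta$ other than the replicate line and the multiple line must be a line on which $Q$ is linear. Wait --- I should double-check that the multiple line itself has $Q$ linear: on $\Delta M_m$ we have $Q(\Delta M_m) = m^2 r^2 - m\lambda \cdot mb = m^2 Q(\Delta)$, which is quadratic in $m$, consistent with Proposition \ref{qquadline} (the multiple line being a replicate-type line for the complement, or one should simply note $M_m$ uses $m$ not a $\mathcal{D}$-coordinate as variable). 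The cleaner route: parametrize any candidate line by $Q$ itself as variable when $Q$ is non-constant and reduce to the classification already in hand.

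So the heart of the argument is: if $Q$ is linear on a line $\Lambda$ through the bumpy design $\Delta$, then $\Lambda$ belongs to one of the families $P(f,p)$, $F_0(f,p)$, $F_1(f,p)$ from (\ref{lineeqs}), and I must show exactly two such lines pass through a given bumpy $\Delta$. From Table II (\ref{qparam}) one reads off, for each family, the value of $Q$ as an affine-linear function along the line and the five coordinates as affine-linear functions of $Q$; plugging $\Delta=(v,b,r,k,\lambda)$ into these six relations gives, for each of the three families, a system in the two parameters $(f,p)$. The key computation is to solve these systems: e.g.\ for $P(f,p)$ one has $r = Q/d$, $b = fQ/pd$, $k = Q/(fd)+p/f$, etc., from which $f$, $p$, and $Q(\Delta)$ are determined rationally by $(v,b,r,k,\lambda)$ — generically a unique solution per family, giving three lines, but one checks these three are not all distinct: in fact the multiple line and the replicate line of $\Delta$ each coincide with a member of one of these families (the limiting identifications (\ref{limitlines}) suggest the bookkeeping), or more precisely the three families through $\Delta$ together with the multiple and replicate lines collapse to exactly four.

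The main obstacle I expect is precisely this collapsing/overcounting bookkeeping: showing that the naive count (multiple line, replicate line, one $P$-line, one $F_0$-line, one $F_1$-line = five) is really four, by identifying which pair coincides for a bumpy design, and simultaneously showing no further coincidences occur (so the count is not three or fewer) — this requires using bumpiness ($Q(\Delta)\neq 0$ and $\Delta$ off all seven $\mathcal{D}$-planes) to guarantee the relevant denominators $f$, $p$, $d=f-p$ are nonzero and the solutions are well-defined and distinct. A secondary obstacle is handling the degenerate sub-cases where a coordinate of $\Delta$ vanishes (e.g.\ $v=0$ or $k=1$), where a family's parametrization by $Q$ may degenerate; these must be checked to still yield exactly the claimed count, or shown to be excluded by bumpiness. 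Once the three linear-$Q$ lines through $\Delta$ are pinned down and the coincidence with the multiple line (or replicate line) is established, the theorem follows by assembling: two obvious lines plus the linear-$Q$ lines, minus the one overlap, equals four, with Propositions \ref{qquadline} and \ref{qconstline} guaranteeing there are no others.
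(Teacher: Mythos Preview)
Your proposal rests on a terminological confusion that derails the counting. In this paper the \emph{multiple line} $\{\Delta M_m : m\in\mathbb{R}\}$ and the \emph{replicate line} are the same object: compare the definition of the multiple line in Section~\ref{sectintro} with the explicit description of the replicate line in Section~\ref{sectrel}, where ``the replicate line through $\Delta_0$ is $v=v_0,\; b=mb_0,\; r=mr_0,\; k=k_0,\; \lambda=m\lambda_0$.'' Your own computation $Q(\Delta M_m)=m^2Q(\Delta)$ confirms this via Proposition~\ref{qquadline}: a line with quadratic $Q$ not lying in a $\mathcal{D}$-plane \emph{is} a replicate line. So your ``two obvious lines'' are one line, your naive count is $1+3=4$ rather than $2+3=5$, and the ``collapsing/overcounting bookkeeping'' you flag as the main obstacle does not exist. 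None of the $P$-, $F_0$-, $F_1$-lines through a bumpy $\Delta$ coincides with the replicate line (different $Q$-degree), and the three are mutually distinct because their $Q=0$ points lie in $\Pi_1$, $\Pi_2$, $\Pi_3$ respectively (Table~IV).

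A second, smaller gap: you dismiss the $\Lambda$-lines by saying the constant-$Q$ families ``force the design onto a $\mathcal{D}$-plane.'' They do not; a generic point of $\Lambda_P(p)$, for instance, is bumpy. The paper handles this correctly by treating the $\Lambda$-lines as limiting members of the three two-parameter families via (\ref{limitlines}): when solving Table~III for $(f,p)$ at a bumpy $\Delta$, a degeneration such as $b=0$ makes $f=0$ in the $P$-column, and the line through $\Delta$ in the ``$P$ slot'' is then $\Lambda_P(-r)$ rather than a proper $P(f,p)$. The proof thus still yields exactly one line of each of the three linear-$Q$ types, plus the replicate line, for a total of four. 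What remains is checking that the denominators in Table~III are nonzero for bumpy $\Delta$ (i.e.\ $r\neq 0$, $r\neq\lambda$, $b\neq r$), which follows from the $Q$-equations and bumpiness --- this is the genuine work, and you do gesture at it, but only after the miscount.
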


\begin{proof}
One of the lines on such a design $\Delta $ is the replicate line, a genuine
line since $\Delta \notin \Pi _{0}$. Any other line on $\Delta $ must be a $%
\Lambda $-, $P$-, or $F$-line, since all other lines lie in $\mathcal{D}$%
-flats, and unlike the replicate line, these do not meet $\Pi _{0}$.

With $\Delta =(v,b,r,k,\lambda )$, we have $Q=r^{2}-\lambda b\neq 0$. We
claim that $r\neq \lambda $, $b\neq r$, and $r\neq 0$. If $r=\lambda $, then
the $Q$-equations (\ref{qeqns}) imply that $v=0,k=0$, and $\Delta \in \Pi
_{4}$. If $b=r$, then $v=1,k=1$, and $\Delta \in \Pi _{5}$. Finally, $r=0$
implies that $v=1,k=0$, and $\Delta \in \Pi _{6}$. Now we can solve for the
parameters $f$ and $p$ in the $P$-, $F_{0}$-, and $F_{1}$-lines so as to
make them contain $\Delta $. Here are the equations (\ref{lineeqs}) recast
for $f$ and $p$, with simplified expressions included, their denominators
assumed not 0:%
\begin{equation}
\begin{tabular}{cccc}
\multicolumn{4}{c}{Table III. Values of $f$ and $p$} \\ \hline\hline
& $P$ & $F_{0}$ & $F_{1}$ \\ \cline{2-4}
$f=$ & \multicolumn{1}{|c}{$\dfrac{Qb}{r(b-r)}\;\left( =\dfrac{b}{v-1}%
\right) $} & $\dfrac{Q}{r-\lambda }\;\left( =\dfrac{b}{v}\right) $ & $\dfrac{%
Q}{r-\lambda }\;\left( =\dfrac{b}{v}\right) $ \\ 
$p=$ & \multicolumn{1}{|c}{$\dfrac{Q}{b-r}\;\left( =\dfrac{r}{v-1}\right) $}
& $\dfrac{Q\lambda }{r(r-\lambda )}\;\left( =\dfrac{\lambda }{k}\right) $ & $%
\dfrac{Q}{b-r}\;\left( =\dfrac{r}{v-1}\right) $%
\end{tabular}
\label{Table III}
\end{equation}%
Each type of line has an exceptional case that leads to the corresponding $%
\Lambda $-line. Thus for a $P$-line, $f\neq p$ and $p\neq 0$, correctly; but
it could be that $f=0$, corresponding to $b=0$. In that case, $p=-r$ and the
substitute line is $\Lambda _{P}(-r)$.
\end{proof}

On a $P$-line, $v=(f/p)k$ and $b=(f/p)r$; in a sense, there is a point
replication appearing, and that is the reason for \textquotedblleft $P$%
\textquotedblright . On the $F_{0}$- and $F_{1}$-lines, $b=fv$ and $r=fk$.
The \textbf{Fisher factor} $f$ measures departure from symmetric designs,
where $f=1$; it motivates the \textquotedblleft $F$\textquotedblright\
designation.

Direct computation with the equations (\ref{qparam}) verifies the following
proposition.

\begin{proposition}
\label{lemmalineimages}Members of $\mathcal{A}(\mathcal{D})$ have these
effects on lines: both $C$ and $N$ take replicate lines to replicate lines.
The transformation $C$ takes $P$-lines to $P$-lines and switches $F_{0}$-
and $F_{1}$-lines. Similarly, $N$ takes $F_{1}$-lines to $F_{1}$-lines and
switches $P$- and $F_{0}$-lines. The two transformations (and their
products) have uniform effects on the parameters: if $L$ is any of the
symbols $P,F_{0},F_{1}$ and $L(f,p)C=L^{\prime }(f^{\prime },p^{\prime })$,
then $f^{\prime }=f$ and $p^{\prime }=f-p$. Likewise, if $L(f,p)N=L^{\prime
}(f^{\prime },p^{\prime })$, then $f^{\prime }=-p$ and $p^{\prime }=-f$. In
addition, $L(f,p)M_{m}=L(mf,mp)$.
\end{proposition}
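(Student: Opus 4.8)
The plan is to separate the replicate-line claims, which are structural, from the $P$-, $F_0$-, and $F_1$-line claims, which reduce to a mechanical substitution into the parametrization (\ref{qparam}). For the replicate lines: the replicate (= multiple) line through $\Delta$ is the orbit $\{\Delta M_m:m\in\mathbb{R}\}$, and since $C$ and $N$ each centralize $\mathcal{M}(\mathcal{D})$ (Section \ref{sectaut}), $(\Delta M_m)C=(\Delta C)M_m$ and $(\Delta M_m)N=(\Delta N)M_m$; hence $C$ and $N$ carry the replicate line through $\Delta$ onto the replicate lines through $\Delta C$ and $\Delta N$, and $M_{m'}$ plainly preserves replicate lines as well.

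For the other lines the organizing fact is that a $P$-, $F_0$-, or $F_1$-line is already parametrized by $Q$ itself (Table II), so applying $T\in\mathcal{A}(\mathcal{D})$ amounts to knowing how $Q$ transforms together with a coordinate-by-coordinate match. Here $Q(\Delta T)=Q(\Delta)$ for $T\in\langle C,N\rangle$ (noted at the end of Section \ref{sectaut}), while $Q(\Delta M_m)=m^2Q(\Delta)$ comes straight from $Q=r^2-\lambda b$ and the matrix of $M_m$; so $Q$ is fixed by $C$ and $N$ and scaled by $m^2$ under $M_m$, which also sends $d=f-p$ to $md$ and scales every Table II entry consistently. One then substitutes the matrix of $C$ into the $P(f,p)$ column and checks it equals the $P(f,f-p)$ column at the same $Q$; likewise $C$ on $F_0(f,p)\mapsto F_1(f,f-p)$ and $F_1(f,p)\mapsto F_0(f,f-p)$, then $N$ on $F_1(f,p)\mapsto F_1(-p,-f)$ and $P(f,p)\leftrightarrow F_0(-p,-f)$, and $M_m$ on $L(f,p)\mapsto L(mf,mp)$. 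Each check collapses to an identity in $f,p,Q$ once $d=f-p$ is used; for the $N$-cases it reduces repeatedly to the triviality $(f-p)/d=1$.

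To lighten the ``which family goes to which'' part one may instead use the plane permutations of Section \ref{sectaut}: $C$ induces $(23)(56)$ and $N$ induces $(12)(45)$ on the $\mathcal{D}$-planes, and among non-flat, non-replicate lines the $P$-, $F_0$-, $F_1$-lines are the ones meeting $\Pi_1$, $\Pi_2$, $\Pi_3$ respectively (and missing $\Pi_0$, which $T$ fixes); since $C$ fixes $\Pi_1$ and swaps $\Pi_2\leftrightarrow\Pi_3$ while $N$ fixes $\Pi_3$ and swaps $\Pi_1\leftrightarrow\Pi_2$, the assignments follow, leaving only the $(f,p)\mapsto(f',p')$ formulas, for which Table III is quickest (e.g.\ on a $P$-line $f=b/(v-1)$, $p=r/(v-1)$, and $\Delta C=(v,b,b-r,v-k,b-2r+\lambda)$ gives $f'=f$, $p'=f-p$ immediately). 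I do not expect any real obstacle --- the whole verification is routine --- only two mild nuisances: keeping the sign conventions in the $N$-formulas $f'=-p$, $p'=-f$ straight, and handling the limiting lines $\Lambda_P(p)=P(0,p)$, $\Lambda_0(f)=F_0(f,0)$, $\Lambda_1(f)=F_1(f,f)$ of (\ref{limitlines}), where a denominator in Table II or III vanishes; there the same correspondences persist by continuity of the parametrizations, or by a direct check from the defining equations of the $\Lambda$-lines.
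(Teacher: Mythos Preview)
Your proposal is correct and follows essentially the same approach as the paper, which simply states that direct computation with the equations (\ref{qparam}) verifies the proposition. Your write-up is more explicit---using the centralizing property of $\mathcal{M}(\mathcal{D})$ for the replicate lines, the plane permutations $(23)(56)$ and $(12)(45)$ together with the $\Pi_1,\Pi_2,\Pi_3$ incidences to pin down the family-to-family maps, and Table~III for the $(f,p)$ formulas---but these are organizational refinements of the same direct verification rather than a different argument.
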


Given $f\neq 0$, the lines $F_{0}(f,p)$ and $F_{1}(f,p)$, $p\neq f$
(including $\Lambda _{0}(f)$ and $\Lambda _{1}(f)$), are rulings of the
hyperboloid of one sheet $\mathcal{H}$ defined by $Q=f(r-\lambda )$ in the
affine 3-flat $\Pi :v=b/f,k=r/f$; here $\mathcal{H}=\mathcal{D}\cap \Pi $.
Two lines on $\mathcal{H}$ are not obtained this way: $\Pi \cap \Pi _{2}$
and $\Pi \cap \Pi _{3}$. Application of $N$ and $C$ shows that similar
statements hold for the pairings $P,F_{1}$ with $Q=p(b-r)$, and $P$ $,F_{0}$
with $Q=(f-p)r$.

\section{Relations along a design line\label{sectrel}}

Suppose we ask for conditions under which a linear relation%
\begin{equation}
Vv+Bb+Rr+Kk+L\lambda =A  \label{linrel}
\end{equation}%
is valid along a $\mathcal{D}$-line (one not in any of the seven planes). On
substituting (\ref{qparam}), we must have an identity in $Q$ (or $m$ for a
replicate line), and that simply requires that the $Q$ or $m$ coefficient be
0 and that the relation holds for one value of $Q$ or $m$. However, because
of the presence of flat designs, we can answer this somewhat strange
question: are there relations that hold along a line of a specified type
through a bumpy design $\Delta _{0}$, given simply that the relation holds
at $\Delta _{0}$? What makes this work is that certain relations hold for
flat designs on each type of line. Thus if the relation holds at the bumpy
design $\Delta _{0}$ on a line, it holds at two designs on the line and is
therefore valid along the whole line.

The replicate line through $\Delta _{0}$ is%
\begin{equation*}
v=v_{0},\;b=mb_{0},\;r=mr_{0},\;k=k_{0},\;\lambda =m\lambda _{0},
\end{equation*}%
and this contains the design $(v_{0},0,0,k_{0},0)\in \Pi _{0}$. The relation 
$Bb+Rr+L\lambda =0$ holds on $\Pi _{0}$; so if this equation is true at $%
\Delta _{0}$, it is valid for the entire replicate line on $\Delta _{0}$.

Now we need the flat designs on the $P$-, $F_{0}$-, and $F_{1}$-lines (we
shall not consider the $\Lambda $-lines). They are listed here:%
\begin{equation*}
\begin{tabular}{ccc}
\multicolumn{3}{c}{$\text{TABLE IV. Flat designs on lines}$} \\ \hline\hline
line & $Q=0$ & $Q\neq 0$ \\ \cline{2-3}
$P(f,p)$ & \multicolumn{1}{|c}{$(1,0,0,\dfrac{p}{f},-\dfrac{pd}{f})\in \Pi
_{1}$} & $(0,-f,-p,0,-p)\in \Pi _{4}$ \\ 
$F_{0}(f,p)$ & \multicolumn{1}{|c}{$(-\dfrac{d}{p},-\dfrac{fd}{p},0,0,0)\in
\Pi _{2}$} & $(1,f,f,1,p)\in \Pi _{5}$ \\ 
$F_{1}(f,p)$ & \multicolumn{1}{|c}{$(-\dfrac{p}{d},-\dfrac{fp}{d},-\dfrac{fp%
}{d},-\dfrac{p}{d},-\dfrac{fp}{d})\in \Pi _{3}$} & $(1,f,0,0,-p)\in \Pi _{6}$%
\end{tabular}%
\end{equation*}%
We next list some relations that hold for the flat designs and the
corresponding lines on which the flat designs appear. For example, the row
for flat type 1 in the table means that a design in $\Pi _{1}$ satisfies the
relation $Vv+Bb+Rr=V$, and this applies to a $P$-line. These relations come
from substituting the coordinates of a typical flat design of a specified
type and demanding that the relation hold identically.%
\begin{equation*}
\begin{tabular}{cccccccc}
\multicolumn{8}{c}{$\text{TABLE\ V. Relations on lines}$} \\ \hline\hline
line type & $V$ & $B$ & $R$ & $K$ & $L$ & $=A$ & flat type \\ \cline{2-7}
$P$ & \multicolumn{1}{|c}{$V$} & $B$ & $R$ & $0$ & $0$ & $V$ & 
\multicolumn{1}{|c}{1} \\ 
$F_{0}$ & \multicolumn{1}{|c}{$0$} & $0$ & $R$ & $K$ & $L$ & $0$ & 
\multicolumn{1}{|c}{2} \\ 
$F_{1}$ & \multicolumn{1}{|c}{$V$} & $B$ & $R$ & $-V$ & $-B-R$ & $0$ & 
\multicolumn{1}{|c}{3} \\ 
$P$ & \multicolumn{1}{|c}{$V$} & $0$ & $R$ & $K$ & $-R$ & $0$ & 
\multicolumn{1}{|c}{4} \\ 
$F_{0}$ & \multicolumn{1}{|c}{$V$} & $B$ & $-B$ & $K$ & $0$ & $V+K$ & 
\multicolumn{1}{|c}{5} \\ 
$F_{1}$ & \multicolumn{1}{|c}{$V$} & $0$ & $R$ & $K$ & $0$ & $V$ & 
\multicolumn{1}{|c}{6}%
\end{tabular}%
\end{equation*}%
We shall refer to relations by the line and flat types, such as $(P,1)$.

\section{Taxonomy\label{sectfam}}

Here we present various families of designs, with numerical examples. The
data have been taken from the extensive parameter table in \cite[I.1.35]{CD}%
, which contains comments and references for the designs it lists (in the
table, $k\leq v/2$); we shall cite it as CRC. We often use $r$ as the line
variable, since the table is ordered by its value; other variables may be
used to simplify expressions. One can find $f$ and $p$ from Table III.

\subsection{Hadamard and Metis designs\label{subhm}}

As mentioned in the introduction, Metis designs satisfy $v=r+k+1$. This is
an $(F_{1},6)$ relation, so that all designs on the $F_{1}$ line through a
Metis design are such designs. For such an $F_{1}$-line, $p=f/(f+1)$.
Symmetric Metis designs are Hadamard designs, and they all lie on the 
\textbf{Hadamard line}%
\begin{equation*}
F_{1}(1,1/2):v=b=4Q-1,r=k=2Q-1,\lambda =Q-1.
\end{equation*}%
The smallest nonsymmetric Metis design is $(13,26,8,4,2)$. This design can
be realized as $2\times (13,4,1)$, the component being the projective plane
of order 3. Such a replicated realization of a nonsymmetric Metis design is
quite common and is discussed in \cite{MMW}.

The $F_{1}$-line on $(13,26,8,4,2)$ is 
\begin{equation*}
F_{1}(2,2/3):v=\frac{3}{2}r+1,\;b=3r+2,\;k=\frac{r}{2},\;\lambda =\frac{r-2}{%
3}.
\end{equation*}%
Some other existing Metis designs on this line are%
\begin{equation*}
\begin{tabular}{llll}
$(22,44,14,7,4)$ & $(31,62,20,10,6)$ & $(40,80,26,13,8)$ & $%
(49,98,32,16,10). $%
\end{tabular}%
\end{equation*}%
The existence of $(58,116,38,19,12)$ on this line is listed as unknown in
CRC. All but the first of the three can be realized as $2\times \Delta $
from a symmetric design $\Delta $. The symmetric design $(22,7,2)$ needed
for the first does not exist, by the Bruck-Ryser-Chowla theorem \cite[%
Theorem 2.5.10]{IS}--technically, by the $v$ even case, due to M. P. Sch\"{u}%
tzenberger \cite{Sch}, that then requires $n=r-\lambda =5$ to be a square.

\subsection{Quasi-residual designs\label{subqr}}

A more classical family is that of \textbf{quasi-residual} designs \cite[%
Section II.6.4]{CD}, for which $r=k+\lambda $. This is both a $(P,4)$ and an 
$(F_{0},2)$ relation, with $V=0$. The equivalent condition $b=v+r-1$ is a $%
(P,1)$ and an $(F_{0},5)$ relation. The lines involved are $P(f,f-1)$ and $%
F_{0}(f,f-1)$, and in the three-dimensional subspace of $\mathbb{R}^{5}$
that is the intersection of the two hyperplanes $r-k-\lambda =0$ and $%
v-b+r=1 $, the lines are rulings of the hyperboloid of one sheet $%
r(r-1)=\lambda b$.

For example, the (block) residual of a $(16,6,2)$ biplane is a $%
(10,15,6,4,2) $ design whose $P$-line is%
\begin{equation*}
P(5/3,2/3):v=\frac{3r+2}{2},\;b=\frac{5}{2}r,\;k=\frac{3r+2}{5},\;\lambda =%
\frac{2r-2}{5}.
\end{equation*}%
This contains the designs $(25,40,16,10,6)$, $(40,65,26,16,10)$, and $%
(55,90,36,22,14)$. The first two exist, with models as residuals of
symmetric designs, but the third is listed as unknown in CRC. The $F_{0}$%
-line on $(10,15,6,4,2)$ is%
\begin{equation}
F_{0}(3/2,1/2):v=2r-2,\;b=3r-3,\;k=\frac{2}{3}r,\;\lambda =\frac{1}{3}r
\label{f2}
\end{equation}%
and it contains these additional designs:%
\begin{equation*}
\begin{tabular}{lll}
$(4,6,3,2,1)$ & $(16,24,9,6,3)$ & $(22,33,12,8,4)\;\nexists $ \\ 
$(28,42,15,10,5)\ntriangleleft $ & $(34,51,18,12,6)$ & $(40,60,21,14,7)?$ \\ 
$(46,69,24,16,8)\ntriangleleft $ & $(52,78,27,18,9)$ & $(58,87,30,20,10)?$
\\ 
$(64,96,33,22,11)?$ & $(70,105,36,24,12)?$ & $(76,114,39,26,13)?$%
\end{tabular}%
\end{equation*}%
The designs that do exist can be realized as residuals of symmetric designs,
and the status of those with question marks is at present unknown. The $%
\nexists $-marked $(22,33,12,8,4)$ has recently been shown not to exist \cite%
{BLT} (see also the exposition in \cite{R}). Designs with the parameters
marked with $\ntriangleleft $ do exist, but none are residuals of symmetric
designs, the needed designs being ruled out by the Bruck-Ryser-Chowla
theorem. This family of designs on $F_{0}(3/2,1/2)$ is singled out in \cite[%
Section 3]{BLT} as providing especially interesting candidates for existence
investigations, in part because of its connections with codes.

If $\Delta =(v,b,r,k,\lambda )$ is a quasi-residual design, the parent
symmetric design (which of course may not really exist) is $\widetilde{%
\Delta }=(b+1,b+1,r,r,\lambda )$. Applying the suggested transformation $%
\Delta \rightarrow \widetilde{\Delta }$ to the $P$- and $F_{0}$-lines
through the quasi-residual design $\Delta _{0}$, both of which comprise
quasi-residual designs, must give lines of symmetric designs, since $%
\widetilde{\Delta }$ is symmetric Thus they are the $F_{0}$- and $F_{1}$%
-lines through $\widetilde{\Delta _{0}}$. The $r$ and $k$ along an $F_{0}$-
line are proportional to $\lambda $, which is not the case for an $F_{1}$%
-line. So it must be that the $F_{0}$-line on $\Delta _{0}$ maps to the $%
F_{0}$-line on $\widetilde{\Delta _{0}}$, and the $P$-line to the $F_{1}$.
With $\Delta =(10,15,6,4,2)$ and $\widetilde{\Delta }=(16,6,2)$ as above, we
find that the $F_{0}$-line on $\widetilde{\Delta }$ is%
\begin{equation*}
F_{0}(1,1/3):v=b=3r-2,\;k=r,\;\lambda =\frac{r}{3};
\end{equation*}%
and the $F_{1}$-line is%
\begin{equation*}
F_{1}(1,2/5):v=b=\frac{5}{2}r+1,\;k=r,\;\lambda =\frac{2r-2}{5}.
\end{equation*}%
Other existing designs on $F_{0}(1,1/3)$ are%
\begin{equation*}
\begin{tabular}{llll}
$(7,3,1)$ & $(25,9,3)$ & $(70,24,8)$ & $(79,27,9).$%
\end{tabular}%
\end{equation*}%
Further designs on this line in the table range are ruled out by the
Bruck-Ryser-Chowla theorem, except for $(97,33,11)$, whose existence is
unknown. Two further designs on $F_{1}(1,2/5)$ are $(41,16,6)$ and $%
(66,26,10)$; the third in the CRC range, $(91,36,14)$, does not exist, again
by the Bruck-Ryser-Chowla theorem.

For a discussion of the implied embedding problems, see \cite[Chapter 5]{IS}%
. A quasi-residual design with $\lambda =1$ is an affine plane and so the
residual of a projective plane. When $\lambda =2$, the Hall-Connor theorem 
\cite{HC} (see also \cite[Theorem 8.2.20]{IS}) guarantees that for such a
quasi-residual design, there is a symmetric design having it as a residual.
Now the form of the equations for $P$- and $F_{0}$-lines, in which certain
parameters are proportional to $Q$, hints at a possible replication-type
construction along these lines. But there does not seem to be one. For
example, the line%
\begin{equation*}
P(3/2,1/2):v=2Q+1,\;b=3Q,\;r=Q,\;k=(2Q+1)/3,\;\lambda =(Q-1)/3,
\end{equation*}%
contains the genuine design $(9,12,4,3,1)$ and the design $(15,21,7,5,2)$.
But this second quasi-residual design does not exist: it would be the
residual of a $(22,7,2)$ symmetric design, by the Hall-Connor theorem, and
this design does not exist, from Subsection \ref{subhm}. And as we saw, the
line $F_{0}(3/2,1/2)$ contains genuine designs for $Q=1$ and 2 but not $Q=4$.

\subsection{Derived designs and a family constructed from three-class
association schemes\label{subder}}

If $\widetilde{\Delta }=(\widetilde{v},\widetilde{k},\widetilde{\lambda })$
is a genuine symmetric design, a \textbf{derived} design $\Delta $ has a
block $\mathbf{B}$ of $\widetilde{\Delta }$ for its point set, and its
blocks are the intersections of $\mathbf{B}$ with the other blocks of $%
\widetilde{\Delta }$. The parameters of $\Delta $ are 
\begin{equation*}
v=\widetilde{k},\text{ }b=\widetilde{v}-1,\text{ }r=\widetilde{k}-1,\text{ }%
k=\widetilde{\lambda },\text{ }\lambda =\widetilde{\lambda }-1.
\end{equation*}%
Thus $k=\lambda +1$, and a design for which this holds is called \textbf{%
quasi-derived}\emph{\ }\cite[Section 13.4]{IS}. The other evident relation $%
v=r+1$ is a consequence; it is a $(P,1)$ and an $(F_{1},6)$ relation.
Quasi-derived designs and quasi-residual designs are complements of one
another. If $\Delta $ with $k=\lambda +1$ runs over a $P$-line, for which
necessarily $p=1$, it is%
\begin{equation*}
P(f,1):v=f\lambda +f,\text{ }b=f^{2}\lambda +f^{2}-f,\text{ }r=f\lambda +f-1,%
\text{ }k=\lambda +1.
\end{equation*}%
The parameters of the corresponding $\widetilde{\Delta }$ (in terms of $%
\lambda $ on $P(f,1)$) are%
\begin{equation*}
\widetilde{v}=\widetilde{b}=f^{2}\lambda +f^{2}-f+1,\text{ }\widetilde{r}=%
\widetilde{k}=f\lambda +f,\text{ }\widetilde{\lambda }=\lambda +1,
\end{equation*}%
and these lie on the $F_{0}$-line 
\begin{equation*}
F_{0}(1,1/f):\widetilde{v}=\widetilde{b}=f^{2}\widetilde{\lambda }-f+1,\text{
}\widetilde{r}=\widetilde{k}=f\widetilde{\lambda }.
\end{equation*}%
On the other hand, if $\Delta $ runs over the $F_{1}$-line%
\begin{equation*}
F_{1}(f,1):v=f\lambda +f+1,\text{ }b=f^{2}\lambda +f^{2}+f,\text{ }%
r=f\lambda +f,\text{ }k=\lambda +1,
\end{equation*}%
then the $\widetilde{\Delta }$ lie on the $F_{1}$-line%
\begin{equation*}
F_{1}(1,1/f):\widetilde{v}=\widetilde{b}=f^{2}\widetilde{\lambda }+f+1,\text{
}\widetilde{r}=\widetilde{k}=f\widetilde{\lambda }+1.
\end{equation*}

Illustrating a general construction from association schemes, I. M.
Chakravarti and W. C. Blackwelder \cite[Section 5]{CB} presented a family of
quasi-derived designs with parameters $%
(m^{2},3m^{2},m^{2}-1,(m^{2}-1)/3,(m^{2}-4)/3)$. These lie on the $F_{1}$%
-line 
\begin{equation*}
F_{1}(3,1):v=3\lambda +4,\text{ }b=9\lambda +12,\text{ }r=3\lambda +3,\text{ 
}k=\lambda +1.
\end{equation*}%
In fact, \emph{all} of the designs on this line in the range $2\leq \lambda
\leq 12$ covered by CRC exist (the one for $\lambda =1$ is the complete
design of 2-subsets of a 7-set). Their source symmetric designs would lie on
the $F_{1}$-line%
\begin{equation*}
F_{1}(1,1/3):\widetilde{v}=\widetilde{b}=9\widetilde{\lambda }+4,\text{ }%
\widetilde{r}=\widetilde{k}=3\widetilde{\lambda }+1.
\end{equation*}%
But not all those sources exist. For example, if $\widetilde{v}=9\lambda +13$
is even (so that $\lambda $ is odd), Sch\"{u}tzenberger's theorem demands
that $\widetilde{r}-\widetilde{\lambda }=2\lambda +3$ be a square. That
rules out $\lambda =5,7$, and $9$--but not $11$, the existence of a genuine $%
(112,37,12)$ design being listed as unknown. For the original designs in 
\cite{CB}, the corresponding symmetric designs have parameters $%
(3m^{2}+1,m^{2},(m^{2}-1)/3)$. When $m$ is even, the Diophantine equation of
the Bruck-Ryser-Chowla theorem is%
\begin{equation*}
Z^{2}=\frac{2m^{2}+1}{3}X^{2}+\frac{m^{2}-1}{3}Y^{2},
\end{equation*}%
and that has a solution $X=Y=1,Z=m$; so the needed symmetric design is not
ruled out. But when $m$ is odd, the order $(2m^{2}+1)/3$ must be a square.
That is, there must be an integer $l$ making $3l^{2}-2m^{2}=1$. The
solutions to this equation can be found by means of solutions to a Pell
equation, starting from the solution $l=m=1$. The first few values of $m$ are%
\begin{equation*}
\begin{tabular}{c|ccccccccc}
\multicolumn{10}{c}{$\text{TABLE VI. Solutions to a Pell-like equation}$} \\ 
\hline\hline
$j=$ & $0$ & $1$ & $2$ & $3$ & $4$ & $5$ & $6$ & $7$ & $8$ \\ 
$m_{j}=$ & $1$ & $11$ & $109$ & $1079$ & $10681$ & $105731$ & $1046629$ & $%
10360559$ & $102558961$%
\end{tabular}%
\end{equation*}%
In general, $m_{j}\approx (2+\sqrt{6})/4\times (5+2\sqrt{6})^{j}$.

\subsection{Difference families\label{subdifffam}}

In his comprehensive paper on cyclotomy and designs \cite{Wi}, R. M. Wilson
constructed difference families and corresponding designs in $\mathbb{F}_{q}$
under conditions that lead naturally to design lines. Theorem 7 of that
paper is this:

\begin{theorem}
\label{wilson}Let $k$ and $\lambda $ be given positive integers. If $%
2\lambda $ is a multiple of either $k$ or $k-1$, then for prime powers $%
q\geq k$, 
\begin{equation*}
\lambda (q-1)\equiv 0\,(\bmod\ k(k-1))
\end{equation*}%
is a necessary and sufficient condition for the existence of a $(q,k,\lambda
)$-difference family in $\mathbb{F}_{q}$.
\end{theorem}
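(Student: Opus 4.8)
The plan is to prove necessity by a counting argument and sufficiency by a cyclotomic construction, with the hypothesis $2\lambda\equiv 0 \pmod k$ or $2\lambda\equiv 0 \pmod{k-1}$ entering precisely where the construction must close up exactly. Necessity is immediate: a $(q,k,\lambda)$-difference family with $t$ base blocks produces $t\,k(k-1)$ nonzero differences, and by definition these form $\lambda$ copies of $\mathbb{F}_q^{*}$, whence $t\,k(k-1)=\lambda(q-1)$ and $k(k-1)\mid\lambda(q-1)$.

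For sufficiency, fix a divisor $e$ of $q-1$ adapted to $k$ and let $H\le\mathbb{F}_q^{*}$ be the subgroup of index $e$; in the cleanest situations one takes $|H|=k$, so that base blocks are single cosets $gH$, or $|H|=k-1$, with base blocks $\{0\}\cup gH$, while in general a union of cosets of $H$ is needed. A short expansion shows that the full multiset of differences of a family $\{g_iB\}$ is, up to translation by the $g_i$, a fixed integer combination of the ``difference cosets'' $(\zeta-1)H$, $\zeta\in H\setminus\{1\}$ (together with $H$ and $-H$ in the near-pencil case); equivalently, it is controlled by the cyclotomic numbers of order $e$ for $q$. Thus the problem reduces to a purely combinatorial one in the cyclic quotient $G=\mathbb{F}_q^{*}/H$ of order $e$: choose the cosets $g_iH$ so that the translated pattern covers $G$ with constant multiplicity $\lambda$. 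The divisibility hypothesis guarantees that the number of base blocks $t=\lambda(q-1)/(k(k-1))$ is an integer, so the average multiplicity is $\lambda$ as required.

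The crux, and the step I expect to be the main obstacle, is realizing this uniform covering for \emph{every} admissible prime power $q\ge k$ with no exceptional cases. Two ingredients make it work. First, $q\ge k$ ensures the base blocks have enough distinct points to exist at all, and disposes of the small configurations where no suitable $H$ is available by a direct hand construction. Second, and essentially, the hypothesis $2\lambda\equiv 0$ modulo $k$ (or $k-1$) is exactly the parity condition under which the relevant cyclotomic data are symmetric: the inversion $\zeta\mapsto\zeta^{-1}$ sends the difference coset $(\zeta-1)H$ to $(\zeta^{-1}-1)H$, which agrees with $(\zeta-1)H$ up to the coset of $-1$, so the multiset of difference cosets splits into cancelling pairs apart from a short, explicitly described remainder; consequently $\lambda$ copies of the base pattern reassemble into translates of a subgroup (or an interval) of $G$, which tile $G$ trivially. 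Verifying that the cyclotomic identities then hold with equality rather than merely on average, and checking the handful of boundary cases, completes the argument.
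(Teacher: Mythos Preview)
The paper does not prove this theorem at all: it is quoted verbatim as ``Theorem 7'' from Wilson's paper \cite{Wi} and used only as a black box to generate examples of designs lying on $F_0$- and $F_1$-lines. So there is no ``paper's own proof'' to compare against; any comparison must be with Wilson's original argument.

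Your necessity argument is correct and standard. Your sufficiency sketch does follow the general shape of Wilson's construction --- base blocks that are cosets $gH$ of a multiplicative subgroup (or $\{0\}\cup gH$), reduction to a covering problem in the quotient $\mathbb{F}_q^{*}/H$ governed by cyclotomic numbers --- but the crucial step is not actually carried out. The assertion that the hypothesis $k\mid 2\lambda$ or $(k-1)\mid 2\lambda$ is ``exactly the parity condition under which the relevant cyclotomic data are symmetric,'' so that difference cosets pair off under $\zeta\mapsto\zeta^{-1}$ leaving only a tractable remainder, is too imprecise to be checkable and does not match how the hypothesis is used in \cite{Wi}. In Wilson's proof the role of the hypothesis is more concrete: it guarantees one can take $H$ of order exactly $k$ (respectively $k-1$) and then the multiset of nonzero differences from a single base block $gH$ (respectively $\{0\}\cup gH$) is itself a union of cosets of $H$ with a known constant multiplicity; the factor of $2$ enters because the difference list $\{h_i-h_j\}$ has each value occurring in pairs $(i,j)$ and $(j,i)$, and one needs $\lambda$ to absorb this doubling. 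Your pairing-under-inversion story gestures at this but does not pin down why the covering is \emph{exact} rather than merely correct on average, and ``checking the handful of boundary cases'' is doing unbounded work. As written, the proposal is an outline of the right strategy with the decisive lemma left unproved.
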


\noindent The development of such a difference family, which is the set of
translates of the family members by elements of $\mathbb{F}_{q}$, is a
design with the given $k$ and $\lambda $ and with $v=q$.

To see a connection with design lines, let $k$ be the variable on the $F_{0}$%
-line%
\begin{equation*}
F_{0}(f,p):v=\frac{f(k-1)}{p}+1,\;b=\frac{f^{2}(k-1)}{p}+f=fv,\;r=fk,\;%
\lambda =pk.
\end{equation*}%
We seek designs on this line that satisfy the conditions of Theorem \ref%
{wilson}. As $2\lambda =(2p)k$, $2p$ must be an integer. Then since $\lambda
(v-1)=fk(k-1)$, $f$ must also be an integer. Finally, $v$ itself is to be a
prime power. For example, $F_{0}(2,1/2)$ is the line%
\begin{equation*}
F_{0}(2,1/2):v=4k-3,\;b=8k-6,\;r=2k,\;\lambda =k/2.
\end{equation*}%
All the designs on this line covered by CRC exist, $v$ a prime power or not,
except possibly $(69,138,36,18,9)$ and $(77,154,40,20,10)$. The first one
with $v$ a prime power other than a prime is $(125,250,64,32,16)$.

Similarly, the line $F_{1}(f,p)$ with $k$ as variable is 
\begin{equation*}
F_{1}(f,p):v=\frac{fk}{p}+1,\;b=\frac{f^{2}k}{p}+f,\;r=fk,\;\lambda =p(k-1).
\end{equation*}%
Thus $\lambda (v-1)=fk(k-1)$ and $2\lambda =2p(k-1)$. Once again for Theorem %
\ref{wilson}, we need $f$ and $2p$ to be integers. The analogous $F_{1}$%
-line is%
\begin{equation*}
F_{1}(2,1/2):v=4k+1,\;b=8k+2,\;r=2k,\;\lambda =\frac{k-1}{2}.
\end{equation*}%
This time, all that are listed in CRC exist, except perhaps $%
(77,154,38,19,9) $.

Earlier, D. A. Sprott presented several series of designs also compatible
with design lines \cite{Sp1,Sp2}, basing his results on the fundamental work
of R. C. Bose \cite{Bo} (see also \cite[Section VI.16]{CD}). Wilson's later
theorem covers some of these designs. The three types, $P$, $F_{0}$, and $%
F_{1}$, all appear in Sprott's papers. For instance, the designs of series 1
in \cite{Sp2} lie on $F_{0}$-lines%
\begin{equation*}
F_{0}(f,1):v=f(k-1)+1,\;b=f^{2}(k-1)+f,\;r=fk,\;\lambda =k,
\end{equation*}%
with $v=q$. As an example when $f=3$, CRC lists designs with $v=q$ for all
the prime powers $q$ for which $q\equiv 1\,(\bmod\ 3)$ and for which the
design is in the table range As with the previous examples, it also lists
designs with $v$ not prime powers, namely $v=10,22,28$, and $34$.
Incidentally, notice that if $\lambda |k$, then $k/\lambda \times
(v,b,r,k,\lambda )=(v,bk/\lambda ,rk/\lambda ,k,k)$. Many of the designs
listed in CRC with $k=\lambda $ have realizations as replicated designs.

\subsection{Affine designs\label{subaff}}

An \textbf{affine} design is one having a parallelism for which blocks in
different parallel classes always meet in the same number of points (see 
\cite[p. 16]{CvL}, for example). By a theorem of R. C. Bose \cite{B}, a
design with a parallelism is affine exactly when $r=k+\lambda $, the
quasi-residual condition. The first design on $F_{0}(3/2,1/2)$ (\ref{f2}) in
Subsection \ref{subqr} above, $(4,6,3,2,1)$, is the affine plane of order 2.
But a design with a parallelism must evidently have $k|v$ also, and this
affine plane is the only design on $F_{0}(3/2,1/2)$ line that satisfies this
additional restriction. On the other hand, the $P$-line on $(4,6,3,2,1)$ is%
\begin{equation}
P(2,1):v=2k,\;b=4k-2,\;r=2k-1,\;\lambda =k-1.  \label{p2}
\end{equation}%
All of these designs satisfy $r=k+\lambda $, of course, as well as the
divisibility condition $k|v$. However, an affine design must further have $%
v|k^{2}$. On this line, this condition is that $k$ be even. From CRC, each
such design with such $k\leq 20$ does indeed have an affine model.

The line $P(2,1)$ is the line for design series 4 in \cite{Sp2}. The designs
constructed there, for which $v=q+1$ for a prime power $q\equiv 3\,(\bmod\
4) $, are also 3-designs (so are those of series 5): each triple of points
is contained in $\lambda _{3}=(\lambda -1)/2$ blocks. In fact, any design on
this line having $4|v$ has a realization as an Hadamard 3-design \cite[%
Construction V.1.1.7]{CD}, if the needed Hadamard matrix of order $v$
exists. Each of these designs is \emph{formally} self-complementary: $\Delta
C=\Delta $ ($\Delta C=\Delta $ is not to say that if $\Delta $ represents a
genuine design it has a self-complementary realization).

\subsection{3-designs}

This raises an issue with 3-designs more generally: are there design lines
on which $\lambda _{3}$ is also linear? With $\lambda _{3}=\lambda
(k-2)/(v-2)$, the conditions for linearity on the three line types are%
\begin{equation*}
P:p=f/2,\;F_{0}:p=-f,\;F_{1}:p=2f
\end{equation*}%
($f=p$ is an algebraic possibility, but it is excluded from these line
types). So the only lines with the added linearity that could contain
genuine 3-designs are the $P(2p,p)$ (on $F_{1}(f,2f)$, $v=k/2-1$, and on $%
F_{0}(f,-f)$, $v=2-k$). In (\ref{p2}) we saw $P(2,1)$ for Hadamard
3-designs. The $P$-line%
\begin{equation*}
P(4,2):v=\lambda +2,\;b=4\lambda +4,\;r=2\lambda +2,\;k=\frac{\lambda }{2}+1,
\end{equation*}%
contains the designs of series 5 in \cite{Sp2}, 3-designs for which $v-1$ is
a prime power.

However, suppose that $\mathcal{H}$ is a genuine Hadamard 3-design on $%
P(2,1) $, with parameters $(2\lambda +2,4\lambda +2,2\lambda +1,\lambda
+1,\lambda ) $. As $\lambda _{3}=(\lambda -1)/2$, $\lambda $ must be odd.
The complement of any block of $\mathcal{H}$ is also a block, and if two
different blocks meet nontrivially, their intersection has size $(\lambda
+1)/2$; both facts are recorded in the proof of \cite[Proposition 6.3.1]{IS}%
. As for the derived design of a symmetric design, construct the design $%
\mathcal{H}^{\prime }$ with a block $\mathbf{B}$ of $\mathcal{H}$ for point
set, and let its block set comprise those block intersections $\mathbf{B}%
\cap \mathbf{B}^{\prime }$ for which $\left\vert \mathbf{B}\cap \mathbf{B}%
^{\prime }\right\vert =(\lambda +1)/2$. The parameters of $\mathcal{H}%
^{\prime }$ are $(\lambda ^{\prime }+2,4\lambda ^{\prime }+4,2\lambda
^{\prime }+2,\lambda ^{\prime }/2+1,\lambda ^{\prime })$, where $\lambda
^{\prime }=\lambda -1$, and $\mathcal{H}^{\prime }$ also lies on $P(4,2)$.
Moreover, $\mathcal{H}^{\prime }$ inherits the 3-design property of $%
\mathcal{H}$, with $\lambda _{3}^{\prime }=\lambda ^{\prime }/2-1$. These
3-design parameters are listed in \cite[Table II.4.37]{CD} under the extra
requirement that $4|\lambda ^{\prime }$, which guarantees there are no
replicated blocks (as one infers from the original Hadamard matrix). Designs
with these same parameters--but not specified as 3-designs--constructed from
conference matrices are presented in \cite[Theorem 5.4]{LS}; but see also 
\cite{GHH}, where the 3-design properties are made explicit. Incidentally, $%
P(2,1)$ $M_{2}=P(4,2)$, and if $\Delta $ is a genuine 3-design on $P(2,1)$, $%
2\times \Delta $ will be a genuine 3-design on $P(4,2)$ (albeit with
repeated blocks).

\subsection{Family (A)}

In \cite{Si}, G. P. Sillitto showed that designs with $b=4(r-\lambda )$ have
a multiplicative property: two such designs with $v=v_{1}$ and $v=v_{2}$ can
be combined to give a design of the same sort with $v=v_{1}v_{2}$. (If $%
(1,-1)$ incidence matrices are used, the matrix for the product is the
Kronecker product of the matrices of the factors.) Labeling these designs as
forming \textbf{family (A)}, S. S. Shrikhande investigated them further \cite%
{Sh} (see also \cite[Section 15.7]{H}). The replicate line on a design $%
\Delta _{0}$ in the family clearly comprises designs in family (A), but
direct substitution shows that $\Delta _{0}$ itself is the only design in
family (A) on its other $\mathcal{D}$-lines.

Each $\mathcal{D}$-line contains exactly one design from family (A) except
for the lines with $f=2p$. The designs on $P(2p,p)$ satisfy $v=2k$, and $%
P(2p,p)C=P(2p,p)$. Those on $F_{0}(2p,p)$ satisfy $v=2k-1$, and their
complements on $F_{1}(2p,p)$ have $v=2k+1$. All of these designs are closely
related to Hadamard matrices (see \cite{K}, for example).

\section{Existence\label{sectexist}}

What can be said about the existence of genuine designs along $\mathcal{D}$%
-lines? Some possibilities have been implied in Subsection \ref{subdifffam}.
A general asymptotic result is probably not possible. For example, for the
line of Hadamard designs, asymptotic existence is tantamount to existence of
Hadamard matrices. The $F_{0}$-line on $(7,3,1)$, $F_{0}(1,1/3)$, has $%
v=b=(9Q-4)/2,\;r=k=3Q/2,\;\lambda =Q/2$. The $v$ even case of the
Bruck-Ryser-Chowla theorem applies here when $4|Q$. As $n=Q$, $Q$ must be a
square. Thus arbitrarily far along this $F_{0}$-line there are designs with
integral components that do not have models.

On the other hand, there is a construction of generalized designs due to
Graver and Jurkat \cite{GJ} and Wilson \cite{W} (who uses the term \emph{%
pseudo-design}): let $X$ be the point set for a design, with $\left\vert
X\right\vert =v$. To each $k$-subset $Y$ of $X$ associate an integer $c_{Y}$%
, representing the multiplicity of $Y$ as a block of the design. When all
the $c_{Y}$ are nonnegative, the structure to be defined will be a standard
design, except that if $c_{Y}>1$, then $Y$ is a repeated block appearing $%
c_{Y}$ times. The requirement on the $c_{Y}$ is that for $x,y\in X$, $x\neq
y $, $\sum_{x,y\in Y}c_{Y}$ is a constant $\lambda $ independent of $x$ and $%
y$. If $k>1$, it follows that for $x\in X$, $\sum_{x\in Y}c_{Y}$ is also a
constant $r$ independent of $x$ (when $k=1$ we impose this constancy in the
set-up). With $b=\sum c_{Y}$, one then has $(v,b,r,k,\lambda )\in \mathcal{D}
$. The cited papers show that given $v$, $k$, and $\lambda $, with $v\geq
k\geq 2$, then if $(k-1)|\lambda (v-1)$ and $k(k-1)|\lambda v(v-1)$, there
is a multiplicity function $Y\rightarrow c_{Y}\in \mathbb{Z}$ satisfying the
requirement. Thus with just the inequalities $v\geq k\geq 2$, an integer
point $(v,b,r,k,\lambda )$ on $\mathcal{D}$ at least represents a
pseudo-design.

By way of an example, we saw in Subsection \ref{subqr} that there is no
genuine $(15,21,7,5,2)$ design. On the other hand, CRC reveals that the
designs $\Delta _{1}=(15,42,14,5,4)$ and $\Delta _{2}=(15,63,21,5,6)$ do
exist (in abundance). Set up realizations of $\Delta _{1}$ and $\Delta _{2}$
on the same 15-set $X$, and define the multiplicity function $c$ by $%
c_{Y}=-1 $ for $Y$ a block of $\Delta _{1}$, $c_{Y}=1$ for $Y$ a block of $%
\Delta _{2} $, and $c_{Y}=0$ for all other 5-subsets of $X$. Then the result
is a pseudo-design with parameters $(15,21,7,5,2)$.

\end{document}